\newtheorem{defn}{Definition}[section]
\newtheorem{thm}{Theorem}[section]
\newtheorem{pro}{Proposition}[section]
\newtheorem{exmp}{Example}[section]
\newtheorem{rmk}{Remark}[section]
\begin{document}

\title{ {\bf Topological pressure dimension for almost additive potentials}
 \footnotetext {* Corresponding author}
  \footnotetext {2010 Mathematics Subject Classification: 54H20, 37B20}}
\author{Lei Liu$^{1,2}$, Huajun Gong$^{2}$, Xiaoyao Zhou$^{2*}$,  \\
\small 1 School of Mathematics and Information Science, Shangqiu Normal University,\\
    \small Shangqiu  476000, Henan P.R.China,\\
     \small  2 Department of Mathematics, University of Science and Technology of China,\\
      \small  Hefei, 230026, Anhui, P.R.China\\
       \small    e-mail: mathliulei@163.com\\
        \small huajun84@hotmail.com\\
         \small    zhouxiaoyaodeyouxian@126.com,\\
}
\date{}
\maketitle

\begin{center}
 \begin{minipage}{155mm}
{\small {\bf Abstract.} This paper is devoted to the study of the topological pressure
dimension for almost additive sequences, which is an extension of topological
entropy dimension. We investigate fundamental properties of the topological pressure dimension
for almost additive sequences. 
In particular, we study the relationships among different types of topological pressure dimension
and identifies an inequality relating them. Also, we show that the topological pressure dimension
is always equal to or greater than 1 for certain special almost additive sequence.}
\end{minipage}
 \end{center}

\noindent{\small{\bf Keywords:} Topological pressure dimension, almost additive sequence,
 topological entropy dimension.}\vskip0.5cm

%%%%%%%%%%%%%%%%%%%%%%%%%%%%%%%%%%%%%%%%%%%%%%%%%%%%%%%%%%%%%%%%%%%%%%%%%%%%%%%%
%%%%%%%%%%%%%%%%%%%%%%%%%%%%%%%%%%%%%%%%%%%%%%%%%%%%%%%%%%%%%%%%%%%%%%%%%%%%%%%%
%%%%%%%%%%%%%%%%%%%%%%%%%%%%%%%%%%%%%%%%%%%%%%%%%%%%%%%%%%%%%%%%%%%%%%%%%%%%%%%%
%%%%%%%%%%%%%%%%%%%%%%%%%%%%%%%%%%%%%%%%%%%%%%%%%%%%%%%%%%%%%%%%%%%%%%%%%%%%%%%%
\section{Introduction}
\qquad In 1958, Kolmogorov applied the notion of entropy from
information theory to ergodic theory. Since then,
the concepts of entropies are useful for studying topological
and measure-theoretic structures of dynamical systems, that is,
topological entropy (see \cite{Adler-Konheim-McAndrew, Bowen1,
Bowen2}) and measure-theoretic entropy (see
\cite{Cornfeld-Fomin-Sinai-book, Kolmogorov-Tihomiorov}). For
instance, two conjugate systems have the same entropy and thus entropy
is a numerical invariant of the class of conjugated dynamical
systems. The theory of expansive dynamical systems has been closely
related to the theory of topological entropy \cite{Bowen-Walters,
Keynes-Sears, Thomas}. Entropy and chaos are closely related, for
example, a continuous map of interval is chaotic if and only if it
has a positive topological entropy \cite{Block-Coppel-book}. Moreover,
various authors have introduced several refinements of entropy,
including slow entropy \cite{Katok-Thouvenot}, measure-theoretic
complexity \cite{Ferenczi}, and entropy-like invariants
for noninvertible maps\cite{Hurley,Nitecki-Przytycki,Cheng-Newhouse}.
The authors \cite{Cheng-Li2,MaKuangLi,MaWu,KuangChengLi}
extended and studied as above some entropy-like
invariants for the non-autonomous discrete dynamical systems given
by a sequence of continuous self-maps of a compact topological space.

Properties of positive entropy systems have been studied in many different
respects along with their applications. Comparing with positive
entropy systems, we have much less understanding and less tools for
zero entropy systems, although systems with positive entropy are much
more complicated than those with zero entropy. Zero entropy systems have
various complexity, and have been studied many authors
(see \cite{Carvalho,Cheng-Li,Dou-Huang-Park1,Dou-Huang-Park2,
Ferenczi-Park,Huang-Yi,Huang-Park-Ye,Misiurewicz-Smital,Park}). These authors
adopted various methods to classify zero entropy dynamical systems.
Carvalho \cite{Carvalho} introduced the notion of entropy dimension
to distinguish the zero topological entropy systems and obtained
some basic properties of entropy dimension. Cheng and Li \cite{Cheng-Li} further
discuss entropy dimension of the probabilistic and the topological
versions and gives a symbolic subspace to achieve zero topological
entropy, but with full entropy dimension.
Ferenczi and Park \cite{Ferenczi-Park} investigated a new entropy-like
invariant for the action of $\mathbb{Z}$ or $\mathbb{Z}^d$ on a
probability space.

Topological pressure is a generalization of topological entropy for a dynamical
system. The notion was first introduced by Ruelle \cite{Ruelle} in 1973 for
expansive maps acting on compact metric spaces. In the same paper he formulated
a variational principle for the topological pressure. Later Walters \cite{Walters1}
generalized these results to general continuous maps on compact metric spaces.
The theory concerning the topological pressure, variational principle and equilibrium
states plays a fundamental role in statistical mechanics, ergodic theory and
dynamical systems (see \cite{Bowen3,Keller,Ruelle1,Walters2}). After the works
of Bowen \cite{Bowen4} and Ruelle \cite{Ruelle2}, the topological pressure
turned into a basic tool in the dimension theory related to dynamical systems.
In 1984, Pesin and Pitskel \cite{Pesin-Pitskel} defined the topological pressure of additive
potentials for non-compact subsets of compact metric spaces and proved the variational
principle under some supplementary conditions. In 1988, Falconer \cite{Falcone}
considered the thermodynamic formalism for sub-additive
potentials for mixing repellers. He proved the variational principle for the topological pressure
under some Lipschitz conditions and bounded distortion assumptions on the sub-additive potentials.
In 1996, Barreira \cite{Barreira1} extended the work of Pesin and Pitskel. He defined the topological
pressure for an arbitrary sequence of continuous functions on an arbitrary subset of compact
metric spaces, and proved the variational principle under a strong convergence assumption on
the potentials. In 2008, Cao, Feng and Huang \cite{Cao-Feng-Huang} and Feng and Huang \cite{Feng-Huang} generalized
Ruelle and Walters¡¯s results to sub-additive potentials in general compact dynamical systems.
Zhang \cite{Zhang}
introduced the notion of measure-theoretic pressure for sub-additive potentials, and studied the
relationship between topological pressure and measure-theoretic pressure. Recently, Chen,
Ding and Cao \cite{Chen-Ding-Cao} studied the local variational principle of
topological pressure for sub-additive potential, Liang and Yan \cite{Liang-Yan} introduced
the topological pressure for any sub-additive potentials of a countable discrete amenable
group action and established a local variational principle for it, and Yan \cite{Yan}
investigated the topological pressure for any sub-additive and asymptotically
sub-additive potentials of $\mathbb{Z}^d$-actions and established the variational principle
for them. Cheng and Li \cite{Cheng-Li2} extended the definition of entropy dimension
and gave the definition of topological pressure dimension, which is also similar to
the fractal measure, and studied the relationships among different types of topological
pressure dimension and identifies an inequality relating them.

In this paper we introduce different various topological
pressure dimension for almost additive sequences and discuss how they are related
to one another. The defined topological pressure dimension generates that of Cheng and Li.
We study properties of the different forms of
topological pressure dimension and prove that those topological pressure dimensions
have some properties similar to the topological pressure. In the end of those works,
we give the topological pressure dimension is greater than or equal to 1
for certain special almost additive sequence.

%%%%%%%%%%%%%%%%%%%%%%%%%%%%%%%%%%%%%%%%%%%%%%%%%%%%%%%%%%%%%%%%%%%%%%%%%%%%%%%%
%%%%%%%%%%%%%%%%%%%%%%%%%%%%%%%%%%%%%%%%%%%%%%%%%%%%%%%%%%%%%%%%%%%%%%%%%%%%%%%%
%%%%%%%%%%%%%%%%%%%%%%%%%%%%%%%%%%%%%%%%%%%%%%%%%%%%%%%%%%%%%%%%%%%%%%%%%%%%%%%%
%%%%%%%%%%%%%%%%%%%%%%%%%%%%%%%%%%%%%%%%%%%%%%%%%%%%%%%%%%%%%%%%%%%%%%%%%%%%%%%%

\section{Definition of topological pressure dimension for almost additive potentials}
\qquad A topological dynamical system  $(X,d,T)$ ($(X,T)$ for short) means that  $(X,d)$ is
a compact metric space together with  a continuous self-map $T:X\to X$.
Let $\mathbb{N}$ denote the set of all positive integers
and let $\mathbb{Z_+}=\mathbb{N}\cup$ $\{0\}$.
Given a topological dynamical system $(X,T)$, denote by $\mathcal{C}_{X}$ the set
of all finite open covers of $X$. Given two covers $\alpha,\beta\in\mathcal{C}_{X}$,
we say that $\beta$ is finer than $\alpha$ $(\alpha\preceq\beta)$ if for every
$V\in\beta$ there is a set $U\in\alpha$ such that $V\subseteq U$.
Let $\alpha\vee\beta=\{U\cap V:U\in\alpha,V\in\beta\}$. It is clear that
$\alpha\preceq\alpha\vee\beta$ and $\beta\preceq\alpha\vee\beta$.
Given $n\in\mathbb N$ and cover $\alpha\in\mathcal{C}_{X}$,
let
\begin{align*}
\bigvee\limits_{i=0}^{n-1}T^{-i}\alpha=\{A_{i_0}\cap T^{-1}A_{i_1}\cap\cdots\cap T^{-(n-1)}A_{i_{n-1}}
:A_{i_j}\in\alpha\}.
\end{align*}
Given $\alpha\in\mathcal{C}_X$, let $\mathcal{N}(\alpha)$ be the number of the sets in a subcover
of $\alpha$ with the smallest cardinality.

Let $n\in\mathbb{N}$ and $\epsilon>0$. Define the metric $d_{n}$  on
$X$ by
\begin{eqnarray*}
d_{n}(x,y)=\max\limits_{0\leq j<n}d(T^j(x),T^j(y)).
\end{eqnarray*}
A subset $F\subseteq X$ is an $(n,\epsilon)$-spanning set of $X$ if for
any $x\in X$, there exists $y\in F$ such that $d_n(x,y)<\epsilon$,
let $s(n,\epsilon)$ be the minimal cardinality of any
$(n,\epsilon)$-spanning set of $X$. The
dual definition is as follows.
A subset $E\subseteq X$ is an $(n,\epsilon)$-separated set of $X$ if for
any $x\neq y$ in $E$, one has $d_{n}(x,y)>\epsilon$,
let the quantity $r(n,\epsilon)$
be the maximal cardinality of $(n,\epsilon)$-separated set of $X$.
It is well known that
\begin{eqnarray*}
s(n,\epsilon)\leq r(n,\epsilon)\leq s(n,\epsilon/2).
\end{eqnarray*}

Let $C(X,\mathbb R)$ be the space of real-valued continuous functions
of $X$. For $\varphi\in C(X,\mathbb R)$ and $n\geq 1$, denote
\begin{align*}
(S_n\varphi)(x)=\sum\limits_{i=0}^{n-1}\varphi(T^i x).
\end{align*}
Now, We first introduce the class of almost additive sequence. Let $(X,T)$ be a topological dynamical system and $\Phi=(\varphi_n)_{n\in\mathbb N}$ be a
sequence of functions with $\varphi_n\in C(X,\mathbb R)$.
\begin{defn}
A sequence of functions $\Phi=(\varphi_n)_{n\in\mathbb N}$ is said to be
almost additive for $T$ if there is a constant
$C>0$ such that
\begin{align} \label{2.1}
-C+\varphi_n(x)+\varphi_m(T^n(x))\leq\varphi_{n+m}(x)\leq\varphi_n(x)+\varphi_m(T^n(x))+C
\end{align}
for every $n,m\in\mathbb N$ and $x\in X$.
\end{defn}
Clearly, given $\varphi\in C(X,\mathbb R),$ 
$(S_n\varphi)_{n\in\mathbb N}$ is an almost additive sequence. Some nontrivial
examples of almost additive sequences, related to the study of Lyapunov exponents
of nonconformal transformations, are given in
\cite{Barreira1,Barreira2,Barreira3,Barreira-Doutor1,Barreira-Doutor2}.
Next, we give the definition of topological pressure dimension
for almost additive sequences by using open covers, spanning sets
and separated sets as follows. Throughout this paper, let $(X,T)$ be a topological dynamical system and
  $\Phi=(\varphi_n)_{n\in\mathbb N}$ be an almost additive sequence for $T$ with a positive constant $C.$ 
  
Let $n\geq 1$
and $\alpha$ be an open cover of $X$. Denote
\begin{align*}
q_n(T,\Phi,\alpha)=\inf\left\{\sum\limits_{B\in\beta}\inf\limits_{x\in B}e^{\varphi_n(x)}|
\beta ~{\rm is~ a~ finite~ subcover~ of}~\bigvee\limits_{i=0}^{n-1}T^{-i}\alpha\right\}.
\end{align*}
Then we define the lower cover $s$-topological pressure with respect to $\Phi$
is to be
\begin{align*}
PD_1(s,T,\Phi)=\sup\limits_{\alpha\in\mathcal C_X}\limsup\limits_{n\to\infty}\frac{1}{n^s}\log q_n(T,\Phi,\alpha),
\end{align*}
\begin{pro}
\begin{description}
\item[(1)]
The $PD_1(s,T,\Phi)$ are nonnegative (or nonpositive) for all $s\geq 0.$
\item[(2)]
If the map $s>0\mapsto PD_1(s,T,\Phi)$ is nonnegative and decreasing
with $s$, then there exists $s_0\in[0,+\infty]$ such that
\begin{align*}
PD_1(s,T,\Phi)=\left\{
\begin{array}{ll}
 +\infty,
&\mbox{\rm if} ~ 0<s<s_0,
\\
\;
 \\
 0,&\mbox{\rm if}~s>s_0.
~~~~~~~~~~~~~~~~~~~~~~~~~~~
\end{array}
\right.
\end{align*}
\item[(3)]
If the map $s>0\mapsto PD_1(s,T,\Phi)$ is nonpositive and increasing
with $s$, then there exists $s_0\in[0,+\infty]$ such that
\begin{align*}
PD_1(s,T,\Phi)=\left\{
\begin{array}{ll}
 -\infty,
&\mbox{\rm if} ~ 0<s<s_0,
\\
\;
 \\
 0,&\mbox{\rm if}~s>s_0.
~~~~~~~~~~~~~~~~~~~~~~~~~~~
\end{array}
\right.
\end{align*}
\end{description}
 \label{pro2.1}
\end{pro}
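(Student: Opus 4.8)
The plan is to extract, once and for all, a pair of \emph{linear} bounds on $\log q_n(T,\Phi,\alpha)$ and then run three nearly identical one-variable arguments in the exponent $s$. First I would iterate the almost additivity inequality (\ref{2.1}) to obtain $|\varphi_n(x)-(S_n\varphi_1)(x)|\le (n-1)C$ for all $x\in X$ and $n\in\mathbb N$; since $X$ is compact and $\varphi_1$ continuous, $M:=\max_{x\in X}|\varphi_1(x)|<\infty$, so $e^{-n(M+C)}\le e^{\varphi_n(x)}\le e^{n(M+C)}$ uniformly in $x$. Feeding the lower estimate into the definition of $q_n$ (every finite subcover is nonempty) gives $q_n(T,\Phi,\alpha)\ge e^{-n(M+C)}$, while a minimal subcover of $\bigvee_{i=0}^{n-1}T^{-i}\alpha$ together with $\mathcal N(\bigvee_{i=0}^{n-1}T^{-i}\alpha)\le \mathcal N(\alpha)^n$ and the upper estimate gives $q_n(T,\Phi,\alpha)\le \mathcal N(\alpha)^n e^{n(M+C)}$. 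Writing $a_n(\alpha):=\log q_n(T,\Phi,\alpha)$ and $K_\alpha:=\log\mathcal N(\alpha)+M+C$ this reads
\begin{align*}
-(M+C)\,n \;\le\; a_n(\alpha) \;\le\; K_\alpha\, n, \qquad n\in\mathbb N .
\end{align*}
These bounds are the whole engine: they force every threshold below to be finite (in fact $\le 1$). I expect the iteration and the cover counting to be the main bookkeeping, but no real obstacle.

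For (1) I would show the sign of $PD_1(s,T,\Phi)$ cannot change with $s$. Since $n^s>0$, the quantity $\tfrac{a_n(\alpha)}{n^s}$ has the same sign as $a_n(\alpha)$ for every $s$. Suppose $PD_1(s_1,T,\Phi)>0$; then some cover $\alpha_0$ satisfies $\limsup_n\tfrac{a_n(\alpha_0)}{n^{s_1}}>0$, so $a_n(\alpha_0)>0$ for infinitely many $n$. For an arbitrary $s\ge 0$ and those same $n$ one has $\tfrac{a_n(\alpha_0)}{n^s}>0$, whence $\limsup_n\tfrac{a_n(\alpha_0)}{n^s}\ge 0$ and so $PD_1(s,T,\Phi)\ge 0$. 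Thus if $PD_1$ is strictly positive at one exponent it is nonnegative at every exponent; the only alternative is $PD_1(s,T,\Phi)\le 0$ for all $s$. This is precisely the claimed dichotomy.

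For (2) fix $\alpha$ and set $L(\alpha,s):=\limsup_n\tfrac{a_n(\alpha)}{n^s}$. If $L(\alpha,s')<\infty$ then $a_n(\alpha)\le (L(\alpha,s')+1)n^{s'}$ for large $n$, so for every $s>s'$ one gets $\tfrac{a_n(\alpha)}{n^s}\le (L(\alpha,s')+1)n^{s'-s}\to 0$, hence $L(\alpha,s)\le 0$; in particular $\{s>0:L(\alpha,s)<\infty\}$ is an up-ray, and by the upper linear bound it contains every $s>1$, so $t(\alpha):=\inf\{s>0:L(\alpha,s)<\infty\}\le 1$ is well defined, $L(\alpha,s)=+\infty$ for $s<t(\alpha)$ and $L(\alpha,s)\le 0$ for $s>t(\alpha)$. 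Put $s_0:=\sup_\alpha t(\alpha)\in[0,+\infty]$. If $s>s_0$ then $s>t(\alpha)$ for every $\alpha$, so $L(\alpha,s)\le 0$ for all $\alpha$ and $PD_1(s,T,\Phi)\le 0$; the nonnegativity hypothesis then upgrades this to $PD_1(s,T,\Phi)=0$. If $0<s<s_0$ some cover has $t(\alpha)>s$, so $L(\alpha,s)=+\infty$ and $PD_1(s,T,\Phi)=+\infty$. This is exactly (2); the decreasing hypothesis is consistent with the conclusion but is not needed in this argument.

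Part (3) is the mirror image, with nonpositivity doing the work nonnegativity did above. Since $PD_1(s,T,\Phi)=\sup_\alpha L(\alpha,s)\le 0$, each $L(\alpha,s)\le 0$. If $L(\alpha,s')>-\infty$ then along a subsequence $a_{n_k}(\alpha)\ge -K\,n_k^{s'}$, so for $s>s'$ one has $\tfrac{a_{n_k}(\alpha)}{n_k^s}\ge -K\,n_k^{s'-s}\to 0$, giving $L(\alpha,s)\ge 0$ and hence $L(\alpha,s)=0$; by the lower linear bound $\{s>0:L(\alpha,s)>-\infty\}$ contains every $s>1$, so $u(\alpha):=\inf\{s>0:L(\alpha,s)>-\infty\}\le 1$ is well defined, with $L(\alpha,s)=-\infty$ for $s<u(\alpha)$ and $L(\alpha,s)=0$ for $s>u(\alpha)$. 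Setting $s_0:=\inf_\alpha u(\alpha)\in[0,+\infty]$, for $s>s_0$ some $\alpha$ has $u(\alpha)<s$, so $L(\alpha,s)=0$ and $PD_1(s,T,\Phi)=0$; for $0<s<s_0$ every $\alpha$ has $u(\alpha)>s$, so $L(\alpha,s)=-\infty$ for all $\alpha$ and $PD_1(s,T,\Phi)=-\infty$. The one genuinely delicate point in all three parts is the interchange of the outer supremum over covers with the inner $\limsup$ over $n$: the thresholds must be defined per cover and only afterwards combined by $\sup_\alpha$ or $\inf_\alpha$, which is exactly where the uniform linear bounds are indispensable.
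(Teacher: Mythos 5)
The paper never actually proves Proposition \ref{pro2.1}: it is stated bare and immediately followed by the remark and the definition of $PD_1(T,\Phi)$, so there is no authors' argument to compare yours against --- your proposal supplies a proof the paper omits, and it is correct. The structure is the right one: the two-sided linear bounds $-(M+C)n\le\log q_n(T,\Phi,\alpha)\le\bigl(\log\mathcal{N}(\alpha)+M+C\bigr)n$ do follow from iterating (\ref{2.1}) with $m=1$; the sign-propagation argument in (1) is valid (a sequence that is positive infinitely often has nonnegative $\limsup$ after division by any $n^s$); and in (2) and (3) you rightly introduce the critical exponent cover by cover ($t(\alpha)$, $u(\alpha)$) and only afterwards combine over covers, with the correct asymmetry $s_0=\sup_\alpha t(\alpha)$ in (2) versus $s_0=\inf_\alpha u(\alpha)$ in (3): a single cover with $L(\alpha,s)=+\infty$ already forces $PD_1(s,T,\Phi)=+\infty$, whereas $PD_1(s,T,\Phi)=-\infty$ requires every cover to degenerate. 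Two small remarks. First, your proof gives more than the proposition asks: the monotonicity hypotheses in (2) and (3) are never used (only the sign hypotheses are), and your linear bounds additionally show $s_0\le 1$ in both cases. Second, a minor patch: in the step ``if $L(\alpha,s')<\infty$ then $L(\alpha,s)\le 0$ for all $s>s'$'' the inequality $a_n(\alpha)\le(L(\alpha,s')+1)n^{s'}$ presupposes $L(\alpha,s')>-\infty$; when $L(\alpha,s')=-\infty$ one has $a_n(\alpha)\le 0$ for all large $n$ and the conclusion is immediate, so that case should be stated separately.
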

Proposition \ref{pro2.1} (2) and (3) indicate that the value of $PD_1(s,T,\Phi)$
jumps from infinity to $0$ at both sides of some point $s_0$. Similar to the fractal
dimension, define the lower cover pressure dimension of $T$ with respect to
$\Phi$ as follows
\begin{align*}
PD_1(T,\Phi)=\sup\{s>0:PD_1(s,T,\Phi)=\infty\}=\inf\{s>0:PD_1(s,T,\Phi)=0\},
\end{align*}

\begin{rmk}
If $s_0=0$, then $PD_1(s,T,\Phi)=0$ for all $s>0$.
If $s_0=+\infty$, then $PD_1(s,T,\Phi)=+\infty$ for all $s>0$ or
$PD_1(s,T,\Phi)=-\infty$ for all $s>0$. For both critical
cases $s_0=0$ or $s_0=+\infty$, there is no jump in $PD_1(s,T,\Phi)$.
\end{rmk}

A similar definition for the upper cover pressure dimension for
almost additive sequences is as follows.

Let $\Phi=(\varphi_n)_{n\in\mathbb N}$ and $\alpha\in\mathcal C_X$. Denote
\begin{align*}
p_n(T,\Phi,\alpha)=\inf\left\{\sum\limits_{B\in\beta}\sup\limits_{x\in B}e^{\varphi_n(x)}|
\beta ~{\rm is~ a~ finite~ subcover~ of}~\bigvee\limits_{i=0}^{n-1}T^{-i}\alpha\right\},
\end{align*}
and define the upper cover $s$-topological pressure with respect $\Phi$ is to be
\begin{align*}
PD_4(s,T,\Phi)=\sup\limits_{\alpha\in\mathcal C_X}\limsup\limits_{n\to\infty}\frac{1}{n^s}\log p_n(T,\Phi,\alpha).
\end{align*}
Furthermore, using the same argument, we define the upper cover pressure dimension
of $T$ with respect to $\Phi$ as  follows:
\begin{align*}
PD_4(T,\Phi)=\sup\{s>0:PD_4(s,T,\Phi)=\infty\}=\inf\{s>0:PD_4(s,T,\Phi)=0\}.
\end{align*}
Next, we give the definitions of topological pressure dimension
for almost additive sequences by using the spanning set and the separated
set.

Let $n\geq 1, \Phi=(\varphi_n)_{n\in\mathbb N}$
and $\epsilon>0$. Denote
\begin{align*}
Q_n(T,\Phi,\epsilon)=\inf\left\{\sum\limits_{x\in F}e^{\varphi_n(x)}|
F ~{\rm is~ an~}(n,\epsilon){\rm -spanning~set~for}~X\right\}.
\end{align*}
Then we define $s$-topological pressure with respect $\Phi$ from the spanning set
to be
\begin{align*}
PD_2(s,T,\Phi)=\lim\limits_{\epsilon\to 0}
\limsup\limits_{n\to\infty}\frac{1}{n^s}\log Q_n(T,\Phi,\epsilon).
\end{align*}
By \cite{Barreira-Gelfert}, when $s=1$, $PD_2(s,T,\Phi)$ is just
the topological pressure of $T$ for almost additive sequences $\Phi$.
For the $s$-topological pressure $PD_2(s,T,\Phi)$, Proposition
\ref{pro2.1} holds as well.
Analogous to the fractal dimension, the topological pressure
dimension of $T$ with respect to $\Phi$ from spanning set is defined as follows:
\begin{align*}
PD_2(T,\Phi)=\sup\{s>0:PD_2(s,T,\Phi)=\infty\}=\inf\{s>0:PD_2(s,T,\Phi)=0\}.
\end{align*}
Denote
\begin{align*}
P_n(T,\Phi,\epsilon)=\sup\left\{\sum\limits_{x\in E}e^{\varphi_n(x)}|
E ~{\rm is~ an~}(n,\epsilon){\rm -separated~set~for}~X\right\}.
\end{align*}
Then we define $s$-topological pressure with respect $\Phi$ from the separated set
to be
\begin{align*}
PD_3(s,T,\Phi)=\lim\limits_{\epsilon\to 0}
\limsup\limits_{n\to\infty}\frac{1}{n^s}\log P_n(T,\Phi,\epsilon).
\end{align*}
We define the topological pressure dimension of $T$ with respect
to $\Phi$ from the separated set as follows:
\begin{align*}
PD_3(T,\Phi)=\sup\{s>0:PD_3(s,T,\Phi)=\infty\}=\inf\{s>0:PD_3(s,T,\phi)=0\}.
\end{align*}

Note that $Q_n(T,\Phi,\epsilon)\leq P_n(T,\phi,\epsilon)$ for any $\epsilon>0$
and $n\in\mathbb N$. Moreover, if $\alpha\in\mathcal{C}_{X}$ has a Lebesgue
number $\delta$, then $q_n(T,\Phi,\alpha)\leq Q_n(T,\Phi,\frac{\delta}{2})$
and $P_n(T,\Phi,\epsilon)\leq p_n(T,\Phi,\gamma)$ if $\epsilon>0$ and $\gamma$
is an open cover with diameter of $\gamma$ less than or equal to $\epsilon$.
This implies the following inequality:
\begin{align}\label{2.2}
PD_1(T,\Phi)\leq PD_2(T,\Phi)\leq PD_3(T,\Phi)\leq PD_4(T,\Phi).
\end{align}
\begin{rmk}
  If $\Phi=(S_n\varphi)_{n\in\mathbb N}$ for a given $\varphi\in C(X,\mathbb R),$ then $PD_i(s,T,\Phi)=PD_i(s,T,\varphi)$
   and $PD_i(T,\Phi)=PD_i(T,\varphi)$, where $PD_i(s,T,\varphi)$ and $PD_i(T,\varphi)$  are $s$-topological pressure and topological pressure dimension for potential function $\varphi$ defined by Cheng and Li \cite{Cheng-Li} respectively.
\end{rmk}

\begin{pro}
Let $(X,T)$ be a topological dynamical system and $\Phi=(\varphi_n)_{n\in\mathbb N}$ be an almost additive sequence.
If $\delta>0$ is such that $d(x,y)<\frac{\epsilon}{2}$ implies
$\mid\varphi_1(x)-\varphi_1(y)\mid<\delta$, then
\begin{align*}
P_n(T,\Phi,\epsilon)\leq e^{2nC+n\delta}Q_n(T,\Phi,\frac{\epsilon}{2}).
\end{align*}
 \label{pro2.2}
\end{pro}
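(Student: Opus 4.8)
The plan is to mimic the classical comparison between separated and spanning sets for the topological pressure, adapting it to the almost additive setting by first rewriting $\varphi_n$ as a Birkhoff-type sum of $\varphi_1$ up to a controlled error, so that the hypothesis on $\varphi_1$ can be applied termwise.

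First I would fix an arbitrary $(n,\epsilon)$-separated set $E$ and an arbitrary $(n,\frac{\epsilon}{2})$-spanning set $F$. For each $x\in E$, the spanning property produces a point $\psi(x)\in F$ with $d_n(x,\psi(x))<\frac{\epsilon}{2}$; choosing one such point for each $x$ defines a map $\psi\colon E\to F$. This map is injective: if $\psi(x)=\psi(x')$ for $x\neq x'$ in $E$, then, since $d_n$ satisfies the triangle inequality, $d_n(x,x')\leq d_n(x,\psi(x))+d_n(\psi(x'),x')<\epsilon$, contradicting that $E$ is $(n,\epsilon)$-separated.

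Next I would control the difference $\varphi_n(x)-\varphi_n(\psi(x))$. The key preliminary step is a telescoping consequence of almost additivity (\ref{2.1}): iterating the two inequalities there yields, for every $z\in X$,
\begin{align*}
\sum_{j=0}^{n-1}\varphi_1(T^j z)-(n-1)C\leq\varphi_n(z)\leq\sum_{j=0}^{n-1}\varphi_1(T^j z)+(n-1)C.
\end{align*}
Writing $y=\psi(x)$, the condition $d_n(x,y)<\frac{\epsilon}{2}$ means $d(T^j x,T^j y)<\frac{\epsilon}{2}$ for $0\leq j<n$, so the hypothesis gives $|\varphi_1(T^j x)-\varphi_1(T^j y)|<\delta$ for each such $j$. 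Applying the upper bound above to $x$, then the termwise continuity estimate, and finally the lower bound to $y$, I obtain $\varphi_n(x)\leq\varphi_n(y)+2(n-1)C+n\delta\leq\varphi_n(y)+2nC+n\delta$, hence $e^{\varphi_n(x)}\leq e^{2nC+n\delta}\,e^{\varphi_n(\psi(x))}$.

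Finally I would sum over $E$ and use the injectivity of $\psi$: since $\psi(E)\subseteq F$ and all summands are positive,
\begin{align*}
\sum_{x\in E}e^{\varphi_n(x)}\leq e^{2nC+n\delta}\sum_{x\in E}e^{\varphi_n(\psi(x))}\leq e^{2nC+n\delta}\sum_{y\in F}e^{\varphi_n(y)}.
\end{align*}
Taking the infimum over all $(n,\frac{\epsilon}{2})$-spanning sets $F$ and then the supremum over all $(n,\epsilon)$-separated sets $E$ gives the claimed inequality. The only genuinely nonroutine step is the telescoping bound relating $\varphi_n$ to $\sum_{j=0}^{n-1}\varphi_1(T^j\cdot)$; once it is established, the continuity hypothesis is used only on $\varphi_1$ and the remainder is the standard injection argument. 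The price paid for almost (rather than exact) additivity is precisely the factor $e^{2nC}$, arising from one application of each side of (\ref{2.1}).
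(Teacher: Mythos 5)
Your proposal is correct and follows essentially the same route as the paper's own proof: an injection from the $(n,\epsilon)$-separated set into the $(n,\frac{\epsilon}{2})$-spanning set, a telescoped form of almost additivity reducing $\varphi_n$ to the Birkhoff sum of $\varphi_1$ up to an additive error of order $nC$, and the termwise application of the continuity hypothesis on $\varphi_1$. The only (harmless) differences are that you keep the sharper constant $(n-1)C$ before relaxing it to $nC$, make the injectivity argument explicit, and phrase the final comparison pointwise rather than via the paper's $\min_{x\in E}e^{\varphi_n(\phi(x))-\varphi_n(x)}$ factor.
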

\begin{proof}
Let $E$ be an $(n,\epsilon)$-separated set and $F$ be an
$(n,\frac{\epsilon}{2})$-spanning set. Define $\phi:E\to F$,
for each $x\in E$, some point $\phi(x)\in F$ with $d_n(x,\phi(x))\leq\frac{\epsilon}{2}$.
Then $\phi$ is injective, further, we have
\begin{align}\label{2.3}
\sum\limits_{y\in F}e^{\varphi_n(y)}\geq\sum\limits_{y\in\phi(E)}e^{\varphi_n(y)}
\geq(\min\limits_{x\in E}e^{\varphi_n(\phi(x))-\varphi_n(x)})\sum\limits_{x\in E}e^{\varphi_n(x)}.
\end{align}
Moreover, by inequality (\ref{2.1}), we have
\begin{align*}
-nC+\sum\limits_{i=0}^{n-1}\varphi_1(T^i(x))\leq\varphi_n(x)\leq
\sum\limits_{i=0}^{n-1}\varphi_1(T^i(x))+nC,
\end{align*}
and
\begin{align*}
-nC+\sum\limits_{i=0}^{n-1}\varphi_1(T^i(\phi(x)))\leq\varphi_n(\phi(x))\leq
\sum\limits_{i=0}^{n-1}\varphi_1(T^i(\phi(x)))+nC.
\end{align*}
Furthermore,
\begin{align*}
-2nC+\sum\limits_{i=0}^{n-1}(\varphi_1(T^i(\phi(x))-\varphi_1(T^i(x)))
&\leq\varphi_n(\phi(x))-\varphi_n(x)\\
&\leq\sum\limits_{i=0}^{n-1}(\varphi_1(T^i(\phi(x)))-\varphi_1(T^i(x)))+2nC.
\end{align*}
Since $d(x,y)<\frac{\epsilon}{2}$ implies $\mid\varphi_1(x)-\varphi_1(y)\mid<\delta$,
it follows that
\begin{align*}
-n\delta\leq\sum\limits_{i=0}^{n-1}(\varphi_1(T^i(\phi(x)))-\varphi_1(T^i(x)))\leq n\delta,
\end{align*}
i.e.,$-2nC-n\delta\leq\varphi_n(\phi(x))-\varphi_n(x)\leq n\delta+2nC$. By the inequality (\ref{2.3}),
we have
\begin{align*}
\sum\limits_{y\in F}e^{\varphi_n(y)}\geq e^{-2nC-n\delta}\sum\limits_{x\in E}e^{\varphi_n(x)}.
\end{align*}
This implies that
\begin{align*}
P_n(T,\Phi,\epsilon)\leq e^{2nC+n\delta}Q_n(T,\Phi,\frac{\epsilon}{2}).
\end{align*}
\end{proof}
\begin{rmk}
From Proposition \ref{pro2.2}, we have
\begin{align*}
\log P_n(T,\Phi,\epsilon)\leq (n\delta+2nC)+\log Q_n(T,\Phi,\frac{\epsilon}{2}),
\end{align*}
which implies
\begin{align*}
PD_3(s,T,\Phi)\leq\limsup\limits_{n\to\infty}\frac{n}{n^s}(\delta+2C)+PD_2(s,T,\Phi).
\end{align*}
This inequality holds for any $\delta$, furthermore, 
\begin{itemize}
  \item if $s=1,$ then 
  \begin{align*}
PD_3(s,T,\Phi)\leq 2C+PD_2(s,T,\Phi);
\end{align*}
  \item if $1<s<+\infty$,
then 
\begin{align*}
PD_3(s,T,\Phi)=PD_2(s,T,\Phi).
\end{align*}
\end{itemize}
\end{rmk}
Similarly, for any $\alpha\in\mathcal{C}_{X}$, if $d(x,y)<diam(\alpha)$
is such that $\mid\varphi_1(x)-\varphi_1(y)\mid\leq\delta$, where
$diam(\alpha)$ denotes the diameter of $\alpha$, then
$p_n(T,\Phi,\alpha)\leq e^{2nC+n\delta}q_n(T,\Phi,\alpha)$.
Since the above inequality holds for any $\delta>0$, it follows that for $1<s<+\infty,$
\begin{align*}
PD_1(s,T,\Phi)=PD_2(s,T,\Phi)=PD_3(s,T,\Phi)=PD_4(s,T,\Phi).
\end{align*}
%%%%%%%%%%%%%%%%%%%%%%%%%%%%%%%%%%%%%%%%%%%%%%%%%%%%%%%%%%%%%%%%%%%%%%%%%%%%%%%%
%%%%%%%%%%%%%%%%%%%%%%%%%%%%%%%%%%%%%%%%%%%%%%%%%%%%%%%%%%%%%%%%%%%%%%%%%%%%%%%%
%%%%%%%%%%%%%%%%%%%%%%%%%%%%%%%%%%%%%%%%%%%%%%%%%%%%%%%%%%%%%%%%%%%%%%%%%%%%%%%%
%%%%%%%%%%%%%%%%%%%%%%%%%%%%%%%%%%%%%%%%%%%%%%%%%%%%%%%%%%%%%%%%%%%%%%%%%%%%%%%%
\section{ $s$-topological pressure for almost additive potentials}
 Let $(X,d,T)$ be a topological dynamical system. Let $n\in\mathbb N$, $\epsilon>0$
and $s>0$. Carvalho \cite{Carvalho} and Cheng and Li \cite{Cheng-Li} gave the  definitions of $s$-topological
entropy and topological entropy dimension as follows:

\noindent $s$-topological entropy is to be as
\begin{align*}
D(s,T)=\lim\limits_{\epsilon\to 0}\limsup\limits_{n\to\infty}
\frac{1}{n^s}\log s(n,\epsilon)=\lim\limits_{\epsilon\to 0}\limsup\limits_{n\to\infty}
\frac{1}{n^s}\log r(n,\epsilon).
\end{align*}
And the topological entropy dimension of $T$ is to be
\begin{align*}
D(T)=\sup\{s>0:D(s,T)=\infty\}=\inf\{s>0:D(s,T)=0\}.
\end{align*}
\begin{pro}\label{pro3.1}
Let $\Phi=(\varphi_n)_{n\in\mathbb N}$ and $\Psi=(\psi_n)_{n\in\mathbb N}$
be two almost additive sequences. Define
\begin{align*}
\Phi+\Psi=(\varphi_n+\psi_n)_{n\in\mathbb N}.
\end{align*}
Then $\Phi+\Psi$ is an almost additive sequence.
\end{pro}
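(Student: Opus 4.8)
The plan is to reduce everything to the defining two-sided estimate \eqref{2.1} and exploit the fact that almost additivity is preserved under addition because adding inequalities that point in the same direction keeps them valid. Since $\Phi$ and $\Psi$ are each almost additive, each comes equipped with its own constant, and I expect the constant for $\Phi+\Psi$ to be simply the sum of the two.

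Concretely, let $C_1>0$ witness that $\Phi$ is almost additive and let $C_2>0$ be the corresponding constant for $\Psi$. For every $n,m\in\mathbb N$ and every $x\in X$, inequality \eqref{2.1} applied to each sequence gives
\begin{align*}
-C_1+\varphi_n(x)+\varphi_m(T^n(x))&\leq\varphi_{n+m}(x)\leq\varphi_n(x)+\varphi_m(T^n(x))+C_1,\\
-C_2+\psi_n(x)+\psi_m(T^n(x))&\leq\psi_{n+m}(x)\leq\psi_n(x)+\psi_m(T^n(x))+C_2.
\end{align*}
I would then add these two chains of inequalities, combining the lower bounds with the lower bounds and the upper bounds with the upper bounds. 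Writing $C=C_1+C_2>0$, this produces
\begin{align*}
-C+(\varphi_n+\psi_n)(x)+(\varphi_m+\psi_m)(T^n(x))\leq(\varphi_{n+m}+\psi_{n+m})(x)\leq(\varphi_n+\psi_n)(x)+(\varphi_m+\psi_m)(T^n(x))+C,
\end{align*}
which is precisely the almost-additivity estimate \eqref{2.1} for the sequence $(\varphi_n+\psi_n)_{n\in\mathbb N}$ with constant $C$.

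To finish, I would observe that each term $\varphi_n+\psi_n$ belongs to $C(X,\mathbb R)$, being a sum of two real-valued continuous functions, so $\Phi+\Psi$ is a genuine sequence of continuous functions and hence an almost additive sequence. There is essentially no obstacle in this argument; the only point requiring a moment of care is keeping the orientation of the inequalities consistent when summing, so that lower bounds are added only to lower bounds and upper bounds only to upper bounds, which is exactly what makes the combined constant $C_1+C_2$ come out correctly.
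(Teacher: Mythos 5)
Your proof is correct and follows essentially the same route as the paper: apply the defining inequality to each of $\Phi$ and $\Psi$ with constants $C_1$ and $C_2$, add the two chains of inequalities termwise, and conclude that $\Phi+\Psi$ is almost additive with constant $C_1+C_2$. Your additional remark that $\varphi_n+\psi_n\in C(X,\mathbb R)$ is a small point of extra care that the paper leaves implicit.
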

\begin{proof}
Since $\Phi=(\varphi_n)_{n\in\mathbb N}$ and $\Psi=(\psi_n)_{n\in\mathbb N}$
are two almost additive sequences, there exist $C_1>0$ and $C_2>0$ such that
\begin{align}\label{3.4}
-C_1+\varphi_n(x)+\varphi_m(T^n(x))\leq\varphi_{n+m}(x)\leq\varphi_n(x)+\varphi_m(T^n(x))+ C_1
\end{align}
and
\begin{align}\label{3.5}
-C_2+\psi_n(x)+\psi_m(T^n(x))\leq\psi_{n+m}(x)\leq\psi_n(x)+\psi_m(T^n(x))+C_2
\end{align}
for every $n,m\in\mathbb N$ and $x\in X$. By the inequalities (\ref{3.4}) and (\ref{3.5}),
we have
\begin{align*}
&-(C_1+C_2)+(\varphi_n(x)+\psi_n(x))+(\varphi_m(T^n(x))+\psi_m(T^n(x)))\\
&\leq(\varphi_{n+m}(x)+\psi_{n+m}(x))\\
&\leq (\varphi_n(x)+\psi_n(x))+(\varphi_m(T^n(x))+\psi_m(T^n(x)))+(C_1+C_2),
\end{align*}
which implies that
\begin{align*}
&-(C_1+C_2)+(\varphi_n+\psi_n)(x)+(\varphi_m+\psi_m)(T^n(x))\\
&\leq(\varphi_{n+m}+\psi_{n+m})(x)\\
&\leq (\varphi_n+\psi_n)(x)+(\varphi_m+\psi_m)(T^n(x))+(C_1+C_2).
\end{align*}
This shows that $\Phi+\Psi=(\varphi_n+\psi_n)_{n\in\mathbb N}$ is an almost additive sequence.
\end{proof}

Similarly, we can easily prove that 
if $\Phi=(\varphi_n)_{n\in\mathbb N}$ is an almost additive sequence and $\lambda\in\mathbb R,$ so is $\lambda\Phi=(\lambda\varphi_n)_{n\in\mathbb N}$.
Denote $\Phi=\mathbf{0}$ if for every $n\in\mathbb N$ and $x\in X$, $\varphi_n(x)=0$ and $\Phi\leq\Psi$
if for every $n\in\mathbb N$ and $x\in X$, $\varphi_n(x)\leq\psi_n(x)$.

\begin{thm}\label{thm3.1}
Let $(X,T)$ be a topological dynamical system and $\Phi=(\varphi_n)_{n\in\mathbb N}$
be an almost additive sequence.
\begin{description}
\item[(1)]
$PD_i(s,T,\mathbf{0})=D(s,T)$, $i=1,2,3,4.$
\item[(2)]
If $s=1$, then
\begin{align*}
-C+D(s,T)+\inf\varphi_1\leq PD_i(s,T,\Phi)\leq D(s,T)+\sup\varphi_1+C,~
i=1,2,3,4.
\end{align*}
\item[(3)]
If $s>1$, then $PD_i(s,T,\Phi)=D(s,T)$, $i=1,2,3,4$.
\end{description}
\end{thm}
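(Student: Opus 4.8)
The plan is to reduce all three parts to the single two-sided estimate on $\varphi_n$ that already appears inside the proof of Proposition \ref{pro2.2}. Iterating the almost additivity (\ref{2.1}) down to $\varphi_1$ gives, for every $x\in X$ and $n\in\mathbb N$,
\[
-nC+\sum_{i=0}^{n-1}\varphi_1(T^i x)\le\varphi_n(x)\le\sum_{i=0}^{n-1}\varphi_1(T^i x)+nC,
\]
and since $n\inf\varphi_1\le\sum_{i=0}^{n-1}\varphi_1(T^i x)\le n\sup\varphi_1$, writing $a=\inf\varphi_1-C$ and $b=\sup\varphi_1+C$ yields the uniform bound $e^{na}\le e^{\varphi_n(x)}\le e^{nb}$ for all $x$ and $n$. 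This inequality drives everything.

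First I would settle (1). When $\Phi=\mathbf 0$ we have $e^{\varphi_n(x)}\equiv1$, so directly $Q_n(T,\mathbf 0,\epsilon)=s(n,\epsilon)$ and $P_n(T,\mathbf 0,\epsilon)=r(n,\epsilon)$; comparing with the definition of $D(s,T)$ gives $PD_2(s,T,\mathbf 0)=PD_3(s,T,\mathbf 0)=D(s,T)$ at once. Likewise $q_n(T,\mathbf 0,\alpha)=p_n(T,\mathbf 0,\alpha)=\mathcal N(\bigvee_{i=0}^{n-1}T^{-i}\alpha)$, so $PD_1(s,T,\mathbf 0)=PD_4(s,T,\mathbf 0)$. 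Finally, the per-$n$ comparisons stated just before (\ref{2.2}) (namely $Q_n\le P_n$, $q_n(\alpha)\le Q_n(\delta/2)$ for a Lebesgue number $\delta$ of $\alpha$, and $P_n(\epsilon)\le p_n(\gamma)$ for $\mathrm{diam}(\gamma)\le\epsilon$), together with the monotonicity of $Q_n,P_n$ in $\epsilon$, yield the chain $PD_1(s,T,\mathbf 0)\le PD_2(s,T,\mathbf 0)\le PD_3(s,T,\mathbf 0)\le PD_4(s,T,\mathbf 0)$. Combining with $PD_1(s,T,\mathbf 0)=PD_4(s,T,\mathbf 0)$ collapses the chain, forcing all four quantities to equal $D(s,T)$.

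For (2) and (3) I would feed the uniform bound into each of the four defining quantities. Summing $e^{na}\le e^{\varphi_n(x)}\le e^{nb}$ over an optimal spanning set gives $s(n,\epsilon)e^{na}\le Q_n(T,\Phi,\epsilon)\le s(n,\epsilon)e^{nb}$, and the identical manipulation for separated sets and for subcovers gives $r(n,\epsilon)e^{na}\le P_n(T,\Phi,\epsilon)\le r(n,\epsilon)e^{nb}$ as well as $\mathcal N(\bigvee_{i=0}^{n-1}T^{-i}\alpha)e^{na}\le q_n(T,\Phi,\alpha),\,p_n(T,\Phi,\alpha)\le\mathcal N(\bigvee_{i=0}^{n-1}T^{-i}\alpha)e^{nb}$. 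Taking logarithms and dividing by $n^s$ inserts the terms $n^{1-s}a$ and $n^{1-s}b$ alongside the corresponding $\Phi=\mathbf 0$ quantity. When $s>1$ these terms tend to $0$ as $n\to\infty$, so the $\limsup$ (and then the $\sup_{\alpha}$ or $\lim_{\epsilon\to0}$) is unchanged and $PD_i(s,T,\Phi)=PD_i(s,T,\mathbf 0)=D(s,T)$ by (1), which is exactly (3). When $s=1$ the terms are the constants $a$ and $b$, which pass additively through $\limsup_n$, through $\sup_{\alpha\in\mathcal C_X}$ and through $\lim_{\epsilon\to0}$, giving $a+D(1,T)\le PD_i(1,T,\Phi)\le b+D(1,T)$, i.e. the asserted $-C+D(s,T)+\inf\varphi_1\le PD_i(s,T,\Phi)\le D(s,T)+\sup\varphi_1+C$.

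The sandwiching itself is routine; the only points demanding care are bookkeeping ones. I must verify that the additive constants genuinely commute with the operators $\sup_{\alpha\in\mathcal C_X}$ and $\lim_{\epsilon\to0}$, which is immediate since $a,b$ depend on neither $\alpha$, $\epsilon$, nor $n$; and for (1) I must correctly invoke the monotonicity of $Q_n,P_n$ in $\epsilon$ so that $\lim_{\epsilon\to0}$ becomes a supremum and the chain $PD_1\le PD_2\le PD_3\le PD_4$ is legitimate at the level of the $s$-pressures rather than only the dimensions of (\ref{2.2}). I expect the collapse of that chain in (1) --- exploiting $PD_1=PD_4$ for $\Phi=\mathbf 0$ --- to be the one genuinely non-mechanical step.
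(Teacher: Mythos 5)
Your proof is correct and, for parts (2) and (3), follows essentially the same route as the paper: the paper also iterates (\ref{2.1}) to get $-nC+n\inf\varphi_1\leq\varphi_n(x)\leq n\sup\varphi_1+nC$ and hence the uniform bound (\ref{3.6}), then feeds it into the four defining quantities, with the $s=1$ and $s>1$ cases handled exactly as you handle them. The one place you genuinely diverge is part (1). The paper disposes of it in one line: with $\Phi=\mathbf 0$ all the weighted sums become cardinalities, ``therefore, according to the definitions'' $PD_i(s,T,\mathbf 0)=D(s,T)$. That is immediate only for $i=2,3$, since $D(s,T)$ is defined via spanning and separated sets; for $i=1,4$ one must still compare the cover quantity $\mathcal{N}\bigl(\bigvee_{i=0}^{n-1}T^{-i}\alpha\bigr)$ with $s(n,\epsilon)$ and $r(n,\epsilon)$. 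Your squeeze --- establish $PD_2=PD_3=D(s,T)$ directly, observe $PD_1(s,T,\mathbf 0)=PD_4(s,T,\mathbf 0)$, and collapse the chain $PD_1\leq PD_2\leq PD_3\leq PD_4$ at the level of $s$-pressures (legitimate because the per-$n$ comparisons stated before (\ref{2.2}) hold for every $n$, $\alpha$, $\epsilon$, and $Q_n,P_n$ are monotone in $\epsilon$) --- supplies precisely the Lebesgue-number/diameter comparison the paper leaves implicit, so your treatment of (1) is more complete than the paper's. The remaining bookkeeping is sound: the constants $a=\inf\varphi_1-C$ and $b=\sup\varphi_1+C$ commute with $\limsup_n$, $\sup_{\alpha\in\mathcal C_X}$ and $\lim_{\epsilon\to 0}$, and $n^{1-s}a,\,n^{1-s}b\to 0$ for $s>1$, which is all that parts (2) and (3) require.
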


\begin{proof}
(1) Since $\Phi=\mathbf{0}$, i.e., $\varphi_n=0$ for every $n\in\mathbb N$, we have
$\sum\limits_{B\in\beta}\inf\limits_{x\in B}e^{\varphi_n(x)}$ and
$\sum\limits_{B\in\beta}\sup\limits_{x\in B}e^{\varphi_n(x)}$ become
the cardinality of the subcover $\beta$, and $\sum\limits_{x\in F}e^{\varphi_n(x)}$
and $\sum\limits_{x\in E}e^{\varphi_n(x)}$ become the cardinalities of the
spanning set $F$ and the separated set $E$ respectively. Therefore, according
to the definitions of the $D(s,T)$ and $PD_i(s,T,\Phi)$, we have
\begin{align*}
PD_i(s,T,\mathbf{0})=D(s,T), ~i=1,2,3,4.
\end{align*}

(2) Since $\Phi=(\varphi_n)_{n\in\mathbb N}$ is an almost additive sequence,
by the inequality (\ref{2.1}), we have
\begin{align*}
-nC+\sum\limits_{i=0}^{n-1}\varphi_1(T^i(x))\leq\varphi_n(x)\leq
\sum\limits_{i=0}^{n-1}\varphi_1(T^i(x))+nC.
\end{align*}
Furthermore,
\begin{align*}
-nC+n\inf\varphi_1\leq\varphi_n(x)\leq n\sup\varphi_1+nC,
\end{align*}
and $\inf\varphi_1,\sup\varphi_1$ are finite since $X$ is compact and $\varphi_1$
is continuous. Hence, we have
\begin{align}\label{3.6}
e^{-nC+n\inf\varphi_1}\leq e^{\varphi_n(x)}\leq e^{n\sup\varphi_1+nC}.
\end{align}
By $s=1$ and the definitions of $PD_i(s,T,\Phi)$, we have
\begin{align*}
-C+D(s,T)+\inf\varphi_1\leq PD_i(s,T,\Phi)\leq D(s,T)+\sup\varphi_1+C,~
i=1,2,3,4.
\end{align*}

(3) If $s>1$, then by the above inequality (\ref{3.6}) and the definitions of $D(s,T)$ and $PD_i(s,T,\Phi)$,
we have
 $D(s,T)=PD_i(s,T,\Phi)$, $i=1,2,3,4$.
\end{proof}

\begin{thm}\label{thm3.2}
Let $(X,T)$ be a topological dynamical system and $s>0$. Let
$\Phi=(\varphi_n)_{n\in\mathbb N}$ and $\Psi=(\psi_n)_{n\in\mathbb N}$
be two almost additive sequences. Then
\begin{description}
\item[(1)]
$PD_3(s,T,\Phi+\Psi)\leq PD_3(s,T,\Phi)+PD_3(s,T,\Psi)$.
\item[(2)]
For $ i=1,2,3,4.$
\begin{eqnarray*}
PD_i(s,T,\lambda\Phi)\left\{
\begin{array}{ll}
 \leq\lambda\cdot PD_i(s,T,\Phi),
&\mbox{\rm if} ~ \lambda\geq 1,
\\
\;
 \\
 \geq\lambda\cdot PD_i(s,T,\Phi),&\mbox{\rm if}~0\leq\lambda\leq 1.

~~~~~~~~~~~~~~~~~~~~~~~~~~~
\end{array}
\right.
\end{eqnarray*}
\end{description}
\end{thm}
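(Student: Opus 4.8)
The plan is to reduce both statements to elementary inequalities for finite sums of positive reals and then propagate these through the defining operations (the inner $\inf$ or $\sup$, the operation $\limsup_{n}\frac{1}{n^s}\log(\cdot)$, and the outer $\sup_\alpha$ or $\lim_{\epsilon\to 0}$). Writing $a_x=e^{\varphi_n(x)}>0$ and $b_x=e^{\psi_n(x)}>0$, I will use three elementary facts: (i) $\sum_x a_x b_x\leq(\sum_x a_x)(\sum_x b_x)$ for positive reals, since the omitted cross terms are nonnegative; (ii) $\sum_x a_x^\lambda\leq(\sum_x a_x)^\lambda$ when $\lambda\geq 1$; and (iii) $\sum_x a_x^\lambda\geq(\sum_x a_x)^\lambda$ when $0\leq\lambda\leq 1$. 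Facts (ii) and (iii) follow by normalising $p_x=a_x/\sum_y a_y\in[0,1]$ and comparing $p_x^\lambda$ with $p_x$.

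For part (1), fix $\epsilon>0$ and $n$, and let $E$ be any $(n,\epsilon)$-separated set. Since $E$ is admissible for all three separated-set quantities, fact (i) with $a_x=e^{\varphi_n(x)}$ and $b_x=e^{\psi_n(x)}$ gives $\sum_{x\in E}e^{(\varphi_n+\psi_n)(x)}\leq(\sum_{x\in E}e^{\varphi_n(x)})(\sum_{x\in E}e^{\psi_n(x)})\leq P_n(T,\Phi,\epsilon)\,P_n(T,\Psi,\epsilon)$. Taking the supremum over $E$ yields the submultiplicative bound $P_n(T,\Phi+\Psi,\epsilon)\leq P_n(T,\Phi,\epsilon)\,P_n(T,\Psi,\epsilon)$, where $\Phi+\Psi$ is almost additive by Proposition \ref{pro3.1}. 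Applying $\log$, dividing by $n^s$, and invoking the subadditivity of $\limsup$ followed by $\lim_{\epsilon\to 0}$ gives $PD_3(s,T,\Phi+\Psi)\leq PD_3(s,T,\Phi)+PD_3(s,T,\Psi)$.

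For part (2), the key observation is that $e^{\lambda\varphi_n(x)}=(e^{\varphi_n(x)})^\lambda$ and that $t\mapsto t^\lambda$ is increasing for $\lambda>0$, so the inner $\inf$ and $\sup$ in the definitions of $q_n,p_n,Q_n,P_n$ commute with the $\lambda$-th power. Combining this with facts (ii) and (iii) applied to the outer finite sums gives, for every admissible subcover, spanning set, or separated set, the bound $G_n(T,\lambda\Phi,\cdot)\leq G_n(T,\Phi,\cdot)^\lambda$ when $\lambda\geq 1$ and $G_n(T,\lambda\Phi,\cdot)\geq G_n(T,\Phi,\cdot)^\lambda$ when $0\leq\lambda\leq 1$, for each $G_n\in\{q_n,p_n,Q_n,P_n\}$; for the infimum-based quantities one takes an infimum over the cover or spanning set after the pointwise inequality, again using that $t\mapsto t^\lambda$ is increasing, while for $P_n$ one takes a supremum. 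Taking $\log$ multiplies by $\lambda$, and since $\lambda\geq 0$ the scalar passes through $\frac{1}{n^s}$, through $\limsup_n$, and through $\sup_\alpha$ or $\lim_{\epsilon\to 0}$, delivering the claimed inequalities for all $i=1,2,3,4$.

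The main subtlety to watch is why part (1) is asserted only for $PD_3$. The argument crucially uses that a single separated set $E$ is simultaneously admissible for $P_n(T,\Phi,\epsilon)$ and $P_n(T,\Psi,\epsilon)$, so the product bound follows by taking a supremum over one common set. For the infimum-based quantities $PD_1,PD_2,PD_4$ there is in general no common optimal cover or spanning set for both $\Phi$ and $\Psi$, so the same route does not yield submultiplicativity; this is precisely the obstacle and the reason the statement restricts part (1) to the separated-set dimension.
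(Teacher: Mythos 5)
Your proof is correct and takes essentially the same approach as the paper's: part (1) via the inequality $\sum_{x\in E}e^{(\varphi_n+\psi_n)(x)}\leq\bigl(\sum_{x\in E}e^{\varphi_n(x)}\bigr)\bigl(\sum_{x\in E}e^{\psi_n(x)}\bigr)$ on a common separated set, and part (2) via the normalized power-sum inequalities $\sum a_i^{\lambda}\lessgtr\bigl(\sum a_i\bigr)^{\lambda}$ combined with the observation that the inner $\inf$/$\sup$ commutes with $t\mapsto t^{\lambda}$. The only differences are expository: you spell out all four quantities $q_n,p_n,Q_n,P_n$ where the paper does only $i=1$ and says ``similarly,'' and your closing remark explaining why (1) holds only for the separated-set quantity is a correct observation not made in the paper.
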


\begin{proof}
(1) By Proposition \ref{pro3.1}, $\Phi+\Psi=(\varphi_n+\psi_n)_{n\in\mathbb N}$ is an
almost additive sequence. Let $E$ be an $(n,\epsilon)$-separated set of $X$ for $T$.
Since
\begin{align*}
\sum\limits_{x\in E}e^{(\varphi_n+\psi_n)(x)}\leq(\sum\limits_{x\in E}e^{\varphi_n(x)})
(\sum\limits_{x\in E}e^{\psi_n(x)}),
\end{align*}
we have $P_n(T,\Phi+\Psi,\epsilon)\leq P_n(T,\Phi,\epsilon)P_n(T,\Psi,\epsilon)$,
which implies
\begin{align*}
PD_3(s,T,\Phi+\Psi)\leq PD_3(s,T,\Phi)+PD_3(s,T,\Psi).
\end{align*}

(2) We only prove that $i=1$ holds. Let $\alpha\in\mathcal{C}_X$ and
$\beta$ is a subcover of $\bigvee\limits_{i=0}^{n-1}T^{-i}(\alpha)$.
We need the following results to finish the proof. If $a_1,a_2,\cdots,a_k$ are
positive numbers with $\sum\limits_{i=1}^ka_i=1$, then
\begin{itemize}
   \item $\sum\limits_{i=1}^ka_i^{\lambda}\leq 1$ when $\lambda\geq 1$,
   \item $\sum\limits_{i=1}^ka_i^{\lambda}\geq 1$ when $0\leq\lambda\leq 1$.
\end{itemize}
Note that $\inf\limits_{x\in B}e^{(\lambda\varphi_n)(x)}=(\inf\limits_{x\in B}e^{\varphi_n(x)})^{\lambda}$
for $\lambda\geq 0$ and $B\in\beta$, then
\begin{align*}
\sum\limits_{B\in\beta}\inf\limits_{x\in B}e^{(\lambda\varphi_n)(x)}
\leq(\sum\limits_{B\in\beta}\inf\limits_{x\in B}e^{\varphi_n(x)})^{\lambda}~{\rm if}~\lambda\geq 1
\end{align*}
and
\begin{align*}
\sum\limits_{B\in\beta}\inf\limits_{x\in B}e^{(\lambda\varphi_n)(x)}
\geq(\sum\limits_{B\in\beta}\inf\limits_{x\in B}e^{\varphi_n(x)})^{\lambda}~{\rm if}~0\leq\lambda\leq 1.
\end{align*}
Therefore, we have
\begin{align*}
q_n(T,\lambda\Phi,\alpha)\leq (q_n(T,\Phi,\alpha))^{\lambda}~{\rm if}~\lambda\geq 1
\end{align*}
and
\begin{align*}
q_n(T,\lambda\Phi,\alpha)\geq (q_n(T,\Phi,\alpha))^{\lambda}~{\rm if}~0\leq\lambda\leq 1,
\end{align*}
which implies (2) holds for $i=1$. Similarly, we can prove that (2) holds for
$i=2,3,4.$
\end{proof}

\begin{thm}\label{thm3.3}
Let $(X,T)$ be a topological dynamical system and $s>0$. Let
$\Phi=(\varphi_n)_{n\in\mathbb N}$ and $\Psi=(\psi_n)_{n\in\mathbb N}$
be two almost additive sequences.
\begin{description}
\item[(1)]
If $\Phi\leq\Psi$, then
$PD_i(s,T,\Phi)\leq PD_i(s,T,\Psi)$, $i=1,2,3,4$.
\item[(2)]
If $s>1$, then
$PD_i(s,T,\Phi+\Psi\circ T-\Psi)=PD_i(s,T,\Phi)$, $i=1,2,3,4.$
\item[(3)]
$PD_3(s,T,\cdot)$ is convex.
\end{description}
\end{thm}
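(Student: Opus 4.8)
The plan is to establish the three assertions separately, since each rests on a different elementary principle and the only part carrying genuine content is (2). For (1) I would argue pointwise: the hypothesis $\Phi\leq\Psi$ means $\varphi_n(x)\leq\psi_n(x)$ for all $n\in\mathbb N$ and $x\in X$, so monotonicity of the exponential gives $e^{\varphi_n(x)}\leq e^{\psi_n(x)}$ everywhere. Hence for every open cover $\alpha$, every finite subcover $\beta$ of $\bigvee_{i=0}^{n-1}T^{-i}\alpha$ and every $B\in\beta$ we have $\inf_{x\in B}e^{\varphi_n(x)}\leq\inf_{x\in B}e^{\psi_n(x)}$ and $\sup_{x\in B}e^{\varphi_n(x)}\leq\sup_{x\in B}e^{\psi_n(x)}$; similarly $\sum_{x\in F}e^{\varphi_n(x)}\leq\sum_{x\in F}e^{\psi_n(x)}$ for any spanning set $F$, and the same inequality holds term-by-term for any separated set. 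Passing these through the infima defining $q_n,p_n,Q_n$ and the supremum defining $P_n$ yields $q_n(T,\Phi,\alpha)\leq q_n(T,\Psi,\alpha)$ and its three companions. Applying $\frac{1}{n^s}\log(\cdot)$, then $\limsup_{n\to\infty}$, and finally the relevant $\sup_\alpha$ or $\lim_{\epsilon\to0}$ gives $PD_i(s,T,\Phi)\leq PD_i(s,T,\Psi)$ for $i=1,2,3,4$.

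For (2) the decisive observation is that, because $s>1$, Theorem \ref{thm3.1}(3) forces \emph{every} almost additive sequence to share the single $s$-topological pressure $D(s,T)$; the particular coboundary shape $\Psi\circ T-\Psi$ is therefore irrelevant, and it suffices to check that $\Phi+\Psi\circ T-\Psi$ is almost additive. I would first verify that $\Psi\circ T=(\psi_n\circ T)_{n\in\mathbb N}$ is almost additive: evaluating the almost additivity of $\Psi$ at the point $Tx$ gives $-C_2+\psi_n(Tx)+\psi_m(T^{n+1}x)\leq\psi_{n+m}(Tx)\leq\psi_n(Tx)+\psi_m(T^{n+1}x)+C_2$, which is exactly the defining inequality for $(\psi_n\circ T)$ with the same constant $C_2$. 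Since scalar multiples of almost additive sequences are again almost additive, $-\Psi$ is almost additive, and by Proposition \ref{pro3.1} the sum $\Phi+(\Psi\circ T)+(-\Psi)$ is almost additive. Theorem \ref{thm3.1}(3) then gives $PD_i(s,T,\Phi+\Psi\circ T-\Psi)=D(s,T)=PD_i(s,T,\Phi)$ for $s>1$ and all $i$.

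For (3) I would use H\"older's inequality. Fix $\lambda\in[0,1]$; then $\lambda\Phi+(1-\lambda)\Psi$ is almost additive by Proposition \ref{pro3.1} and scalar closure. For any $(n,\epsilon)$-separated set $E$, factoring $e^{(\lambda\varphi_n+(1-\lambda)\psi_n)(x)}=(e^{\varphi_n(x)})^{\lambda}(e^{\psi_n(x)})^{1-\lambda}$ and applying H\"older with conjugate exponents $1/\lambda$ and $1/(1-\lambda)$ yields $\sum_{x\in E}e^{(\lambda\varphi_n+(1-\lambda)\psi_n)(x)}\leq\big(\sum_{x\in E}e^{\varphi_n(x)}\big)^{\lambda}\big(\sum_{x\in E}e^{\psi_n(x)}\big)^{1-\lambda}$. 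Bounding each factor by the corresponding supremum over separated sets and then taking the supremum on the left gives $P_n(T,\lambda\Phi+(1-\lambda)\Psi,\epsilon)\leq(P_n(T,\Phi,\epsilon))^{\lambda}(P_n(T,\Psi,\epsilon))^{1-\lambda}$. Taking logarithms, dividing by $n^s$, using $\limsup_{n}(a_n+b_n)\leq\limsup_n a_n+\limsup_n b_n$, and letting $\epsilon\to0$ gives $PD_3(s,T,\lambda\Phi+(1-\lambda)\Psi)\leq\lambda\,PD_3(s,T,\Phi)+(1-\lambda)\,PD_3(s,T,\Psi)$, the asserted convexity.

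The main obstacle is conceptual rather than computational and lies entirely in (2): one must recognize that for $s>1$ the coboundary structure is a red herring and that the claim collapses, via Theorem \ref{thm3.1}(3), to the single verification that $\Phi+\Psi\circ T-\Psi$ is almost additive. Parts (1) and (3) are routine once the pointwise and H\"older estimates are set up; the only care needed there is in interchanging the $\sup$/$\inf$ with the limit operations in $n$ and $\epsilon$, which is harmless since all the quantities involved are positive and $\limsup$ is subadditive.
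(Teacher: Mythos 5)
Your proof is correct. Parts (1) and (3) are essentially the paper's own arguments: pointwise monotonicity of the exponential pushed through the four defining quantities, and H\"older's inequality on separated sets followed by $\frac{1}{n^s}\log$, $\limsup_n$, and $\epsilon\to 0$. Part (2), however, takes a genuinely different route. The paper argues directly: from almost additivity of $\Psi$ it deduces $-2nC+\psi_1(T^n x)-\psi_1(x)\le \psi_n(Tx)-\psi_n(x)\le \psi_1(T^n x)-\psi_1(x)+2nC$, hence $|\psi_n(Tx)-\psi_n(x)|\le 2nC+2\parallel\psi_1\parallel$, which sandwiches $q_n(T,\Phi+\Psi\circ T-\Psi,\alpha)$ between $e^{-2nC-2\parallel\psi_1\parallel}q_n(T,\Phi,\alpha)$ and $e^{2nC+2\parallel\psi_1\parallel}q_n(T,\Phi,\alpha)$; since $s>1$, the correction $(2nC+2\parallel\psi_1\parallel)/n^s$ vanishes as $n\to\infty$, and the same estimate is repeated for $i=2,3,4$. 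You instead invoke Theorem~\ref{thm3.1}(3): for $s>1$ every almost additive sequence has $s$-pressure equal to $D(s,T)$, so the whole claim reduces to verifying that $\Phi+\Psi\circ T-\Psi$ is almost additive, which you do correctly --- $\Psi\circ T$ satisfies the defining inequality evaluated at $Tx$ (using $T(T^n x)=T^n(Tx)$), $-\Psi$ is almost additive with the same constant, and Proposition~\ref{pro3.1} handles the sum. Your reduction is shorter and makes the conceptual point transparent: for $s>1$ the coboundary is a red herring because all almost additive pressures collapse to $D(s,T)$. What the paper's computation buys in exchange is quantitative information that survives below the collapse threshold: the explicit sandwich shows that at $s=1$ the two pressures still differ by at most an additive constant $2C$ (parallel to the $s=1$ clause of Theorem~\ref{thm3.4}(2)), a statement your argument cannot produce since Theorem~\ref{thm3.1}(3) gives nothing at $s=1$.
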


\begin{proof}
(1) Since $\Phi\leq\Psi$, we have $\varphi_n(x)\leq\psi_n(x)$
for any $x\in X$ and $n\in\mathbb N$. Furthermore,
$e^{\varphi_n(x)}\leq e^{\psi_n(x)}$. By the definitions of
$PD_i(s,T,\Phi)$, we have
\begin{align*}
PD_i(s,T,\Phi)\leq PD_i(s,T,\Psi),~ i=1,2,3,4.
\end{align*}

(2) We only prove that $PD_1(s,T,\Phi+\Psi\circ T-\Psi)=PD_1(s,T,\Phi)$.
Let $\alpha\in\mathcal{C}_X$ and
$\beta$ be a subcover of $\bigvee\limits_{i=0}^{n-1}T^{-i}(\alpha)$.
We can easily prove that
$\Phi+\Psi\circ T-\Psi=(\varphi_n+\psi_n\circ T-\psi_n)_{n\in\mathbb N}$
is an almost additive sequence.
Since $\Psi=(\psi_n)_{n\in\mathbb N}$
is an almost additive sequence, there exists $C>0$ such that
\begin{align*}
-nC+\sum\limits_{i=0}^{n-1}\psi_1(T^i(x))\leq\psi_n(x)\leq
\sum\limits_{i=0}^{n-1}\psi_1(T^i(x))+nC
\end{align*}
and
\begin{align*}
-nC+\sum\limits_{i=1}^{n}\psi_1(T^i(x))\leq\psi_n\circ T(x)\leq
\sum\limits_{i=1}^{n}\psi_1(T^i(x))+nC.
\end{align*}
Furthermore, we have
\begin{align}\label{3.7}
-2nC+\psi_1(T^n(x))-\psi_1(x)\leq\psi_n\circ T(x)-\psi_n(x)\leq\psi_1(T^n(x))-\psi_1(x)+2nC.
\end{align}
Denote $\parallel\varphi\parallel=\max\limits_{x\in X}|\varphi(x)|$. Hence, by the inequality (\ref{3.7}),
we have
\begin{align*}
-2nC-2\parallel\psi_1\parallel\leq \psi_n\circ T(x)-\psi_n(x)\leq2\parallel\psi_1\parallel+2nC.
\end{align*}
 Note that
\begin{align*}
\sum\limits_{B\in\beta}\inf\limits_{x\in B}e^{\varphi_n(x)+(-2nC+\psi_1(T^n(x))-\psi_1(x))}
&\leq
\sum\limits_{B\in\beta}\inf\limits_{x\in B}e^{\varphi_n(x)+\psi_n\circ T(x)-\psi_n(x)}\\
&\leq
\sum\limits_{B\in\beta}\inf\limits_{x\in B}e^{\varphi_n(x)+(\psi_1(T^n(x))-\psi_1(x)+2nC)}
\end{align*}
Therefore, we have
\begin{align*}
e^{-2nC-2\parallel\psi_1\parallel}\sum\limits_{B\in\beta}\inf\limits_{x\in B}e^{\varphi_n(x)}
\leq\sum\limits_{B\in\beta}\inf\limits_{x\in B}e^{\varphi_n(x)+\psi_n\circ T(x)-\psi_n(x)}
\leq e^{2nC+2\parallel\psi_1\parallel}\sum\limits_{B\in\beta}\inf\limits_{x\in B}e^{\varphi_n(x)},
\end{align*}
which implies
\begin{align*}
e^{-2nC-2\parallel\psi_1\parallel}q_n(T,\Phi,\alpha)
\leq q_n(T,\Phi+\Psi\circ T-\Psi,\alpha)
\leq e^{2nC+2\parallel\psi_1\parallel}q_n(T,\Phi,\alpha)
\end{align*}
for any $\alpha\in\mathcal{C}_{X}$. Furthermore,
\begin{align*}
&(-2nC-2\parallel\psi_1\parallel)+\log q_n(T,\Phi,\alpha)\\
&\leq\log q_n(T,\Phi+\Psi\circ T-\Psi,\alpha)\\
&\leq (2nC+2\parallel\psi_1\parallel)+\log q_n(T,\Phi,\alpha).
\end{align*}
Therefore, by the definition of $PD_1(s,T,\Phi)$ and $s>1$, we have
\begin{align*}
PD_1(s,T,\Phi+\Psi\circ T-\Psi)=PD_1(s,T,\Phi).
\end{align*}
Similarly, we can prove $PD_i(s,T,\Phi+\Psi\circ T-\Psi)=PD_i(s,T,\Phi)(i=2,3,4)$
for $s>1$.

(3) Let $t\in [0,1]$ and $E$ be an $(n,\epsilon)$-separated set of $X$ for $T$.
By H\"{o}lder's inequality, we have
\begin{align*}
\sum\limits_{x\in E}e^{t\varphi_n(x)+(1-t)\psi_n(x)}\leq (\sum\limits_{x\in E}e^{\varphi_n(x)})^t
(\sum\limits_{x\in E}e^{\phi_n(x)})^{1-t}.
\end{align*}
This implies
\begin{align*}
P_n(T,t\Phi+(1-t)\Psi,\epsilon)\leq (P_n(T,\Phi,\epsilon))^t(P_n(T,\Psi,\epsilon))^{1-t}.
\end{align*}
Therefore,
\begin{align*}
PD_3(T,t\Phi+(1-t)\Psi)\leq tPD_3(s,T,\Phi)+(1-t)PD_3(s,T,\Psi).
\end{align*}
\end{proof}
Let $(X,T)$ be a topological dynamical system and $\Phi=(\varphi_n)_{n\in\mathbb N}$ be an almost additive sequence.
Define $\Phi_k=(\varphi_{nk})_{n\in\mathbb N}$. Clearly, $\Phi_k$ is
also an almost additive sequence. 

\begin{pro}\label{pro3.2}
Let $\Phi=(\varphi_n)_{n\in\mathbb N}$ be an almost additive sequence(with respect to $T$ in $X$).
If $T$ is a homeomorphism, then
$\Phi'=(\varphi_n\circ T^{-(n-1)})_{n\in\mathbb N}$ is an almost additive sequence(with respect to $T^{-1}$ in $X$).
\end{pro}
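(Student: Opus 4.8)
The plan is to verify the defining almost additive inequality for $\Phi'$ directly from (\ref{2.1}), the key observation being that reversing time corresponds to reversing the order in which the cocycle is split. First I would record that each $\varphi'_n = \varphi_n \circ T^{-(n-1)}$ lies in $C(X,\mathbb{R})$: since $T$ is a homeomorphism, $T^{-(n-1)}$ is continuous, hence so is its composition with $\varphi_n$. Thus $\Phi'$ is a genuine sequence of real-valued continuous functions, and it remains only to produce the constant.

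Next I would unwind the three relevant terms, using $(T^{-1})^n = T^{-n}$:
\[
\varphi'_n(x) = \varphi_n(T^{-(n-1)}(x)), \qquad \varphi'_m(T^{-n}(x)) = \varphi_m(T^{-(n+m-1)}(x)), \qquad \varphi'_{n+m}(x) = \varphi_{n+m}(T^{-(n+m-1)}(x)).
\]
The decisive step is to set $z = T^{-(n+m-1)}(x)$ and apply (\ref{2.1}) \emph{with the roles of $n$ and $m$ interchanged}, i.e.\ peeling off the first $m$ coordinates and then the remaining $n$:
\[
-C + \varphi_m(z) + \varphi_n(T^m(z)) \leq \varphi_{n+m}(z) \leq \varphi_m(z) + \varphi_n(T^m(z)) + C.
\]
Since (\ref{2.1}) is assumed for all indices, this swapped instance is legitimate.

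Finally I would substitute $z = T^{-(n+m-1)}(x)$ and simplify the exponents. One has $\varphi_m(z) = \varphi'_m(T^{-n}(x))$ directly, while $T^m(z) = T^{m-(n+m-1)}(x) = T^{-(n-1)}(x)$ yields $\varphi_n(T^m(z)) = \varphi'_n(x)$, and $\varphi_{n+m}(z) = \varphi'_{n+m}(x)$. The displayed chain then reads precisely
\[
-C + \varphi'_n(x) + \varphi'_m(T^{-n}(x)) \leq \varphi'_{n+m}(x) \leq \varphi'_n(x) + \varphi'_m(T^{-n}(x)) + C,
\]
which is exactly the almost additivity of $\Phi'$ with respect to $T^{-1}$, and with the \emph{same} constant $C$. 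The only real obstacle here is the index bookkeeping: one must split the cocycle as $m$-then-$n$ rather than $n$-then-$m$, and anchor it at the base point $z = T^{-(n+m-1)}(x)$. Once those two choices are made correctly, the inequality falls out with no new constant, so neither a sharper estimate nor any compactness argument is needed.
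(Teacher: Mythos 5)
Your proof is correct and follows essentially the same route as the paper: both substitute the point $T^{-(n+m-1)}(x)$ into the almost additivity inequality (\ref{2.1}) and identify the resulting terms with $\varphi'_n$, $\varphi'_m$, $\varphi'_{n+m}$, yielding the same constant $C$. The only cosmetic difference is that you interchange the roles of $n$ and $m$ when invoking (\ref{2.1}) so the conclusion appears in the standard order, whereas the paper applies (\ref{2.1}) as written and obtains the inequality with $n$ and $m$ relabeled, which is equivalent since the definition quantifies over all pairs.
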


\begin{proof}
Denote $\varphi_n'=\varphi_n\circ T^{-(n-1)}$. Since $\Phi=(\varphi_n)_{n\in\mathbb N}$,
there exists $C>0$ such that
\begin{align}\label{3.8}
-C+\varphi_n(x)+\varphi_m(T^n(x))\leq\varphi_{n+m}(x)\leq\varphi_n(x)+\varphi_m(T^n(x))+C
\end{align}
for every $n,m\in\mathbb N$ and $x\in X$. Next, we prove
\begin{align}\label{3.9}
-C+\varphi_m'(x)+\varphi_n'(T^{-m}(x))\leq\varphi_{n+m}'(x)\leq\varphi_m'(x)+\varphi_n'(T^{-m}(x))+C.
\end{align}
In the inequality (\ref{3.8}), by using $T^{-(n+m-1)}(x)$ instead of $x$, then
\begin{align*}
&-C+\varphi_n(T^{-(n+m-1)}(x))+\varphi_m(T^n(T^{-(n+m-1)}(x)))\\
&\leq\varphi_{n+m}(T^{-(n+m-1)}(x))\\
&\leq\varphi_n(T^{-(n+m-1)}(x))+\varphi_m(T^n(T^{-(n+m-1)}(x)))+C
\end{align*}
i.e.,
\begin{align*}
&-C+(\varphi_n\circ T^{-(n-1)}(T^{-m}(x))+(\varphi_m\circ T^{-(m-1)})(x)\\
&\leq(\varphi_{n+m}\circ T^{-(n+m-1)})(x)\\
&\leq(\varphi_n\circ T^{-(n-1)}(T^{-m}(x))+(\varphi_m\circ T^{-(m-1)})(x)+C,
\end{align*}
which implies the inequality (\ref{3.9}) holds. Therefore,
$\Phi'=(\varphi_n\circ T^{-(n-1)})_{n\in\mathbb N}$ is an almost additive sequence for $T^{-1}$.
\end{proof}

\begin{thm}\label{thm3.4}
Let $(X,T)$ be a topological dynamical system and $s>0$.
Let $\Phi=(\varphi_n)_{n\in\mathbb N}$ be an almost additive sequence for $T$
and $\Phi'=(\varphi_n\circ T^{-(n-1)})_{n\in\mathbb N}$.
\begin{description}
\item[(1)]
If $k>0$, then
\begin{align*}
PD_i(s,T^k,\Phi_k)\leq k^sPD_i(s,T,\Phi),~i=1,2,3,4.
\end{align*}
\item[(2)]
\begin{itemize}
\item If $s=1,$
\begin{align*}
-C+PD_i(s,T,\varphi_1)\leq PD_i(s,T,\Phi)\leq PD_i(s,T,\varphi_1)+C,~ i=1,2,3,4;
\end{align*}
\item if $s>1,$
\begin{align*}
PD_i(s,T,\varphi_1)= PD_i(s,T,\Phi),~ i=1,2,3,4.
\end{align*}
\end{itemize}
\item[(3)]
If $T$ is a homeomorphism, then
\begin{align*}
PD_i(s,T^{-1},\Phi')=PD_i(s,T,\Phi), ~i=1,2,3,4.
\end{align*}
\end{description}
\end{thm}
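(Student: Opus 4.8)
The plan is to prove all three parts by first comparing, for the two systems in question, the four defining quantities $q_n,p_n,Q_n,P_n$ at matched time scales, and only then passing to the $\limsup$ and to the dimension. For part (1) the governing idea is that $n$ iterates of $T^k$ are $nk$ iterates of $T$, and that the correctly matched potential is $\varphi_{nk}$, i.e. $\Phi_k$ at time $n$. I would first dispose of $i=2,3$: from
\[ d_n^{T^k}(x,y)=\max_{0\le j<n}d(T^{kj}x,T^{kj}y)\le\max_{0\le l<nk}d(T^lx,T^ly)=d_{nk}(x,y), \]
every $(n,\epsilon)$-separated set for $T^k$ is $(nk,\epsilon)$-separated for $T$, so $P_n(T^k,\Phi_k,\epsilon)\le P_{nk}(T,\Phi,\epsilon)$; dually every $(nk,\epsilon)$-spanning set for $T$ is $(n,\epsilon)$-spanning for $T^k$, so $Q_n(T^k,\Phi_k,\epsilon)\le Q_{nk}(T,\Phi,\epsilon)$. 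Writing $\tfrac1{n^s}=k^s\tfrac1{(nk)^s}$, taking $\limsup_{n\to\infty}$ along the subsequence $\{nk\}\subseteq\mathbb N$ and then $\epsilon\to0$ yields $PD_i(s,T^k,\Phi_k)\le k^sPD_i(s,T,\Phi)$ for $i=2,3$.

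For the cover quantities I would use the refinement $\bigvee_{j=0}^{n-1}(T^k)^{-j}\gamma\preceq\bigvee_{l=0}^{nk-1}T^{-l}\gamma$, noting that each member of the coarser (left) cover is a union of members of the finer one. For $i=1$ this is benign: sending each member of any subcover of $\bigvee_{l=0}^{nk-1}T^{-l}\gamma$ to an enclosing member of $\bigvee_{j=0}^{n-1}(T^k)^{-j}\gamma$ produces a subcover whose sum of infima is no larger (enlarging $B$ can only lower $\inf_B e^{\varphi_{nk}}$), so $q_n(T^k,\Phi_k,\gamma)\le q_{nk}(T,\Phi,\gamma)$ and the subsequence estimate gives $PD_1(s,T^k,\Phi_k)\le k^sPD_1(s,T,\Phi)$. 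The case $i=4$ is where I expect the real difficulty: for $p_n$ the analogous monotonicity can fail, because enlarging $B$ only raises $\sup_B e^{\varphi_{nk}}$, so passing from the finer to the coarser cover need not decrease $\sum_B\sup_B e^{\varphi_{nk}}$ (a coarse subcover forced to use two high-supremum sets may exceed the fine one). I would instead route $i=4$ through the separated-set bound, using the $T^k$-analogue of Proposition \ref{pro2.2}, namely $p_n(T^k,\Phi_k,\gamma)\le e^{2nC+n\delta}P_n(T^k,\Phi_k,\eta)$ for $\gamma$ of small diameter, together with $P_n(T^k,\Phi_k,\eta)\le P_{nk}(T,\Phi,\eta)$; the factor contributes $(\delta+2C)n^{1-s}$, which vanishes for $s>1$ (where all four dimensions already coincide by the remark after Proposition \ref{pro2.2}) and which must be handled with care at $s=1$ by letting $\mathrm{diam}(\gamma)\to0$.

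Part (2) I would obtain by comparing $\Phi$ with the additive sequence $(S_n\varphi_1)_{n\in\mathbb N}$. Iterating (\ref{2.1}) gives $-nC+(S_n\varphi_1)(x)\le\varphi_n(x)\le(S_n\varphi_1)(x)+nC$, hence $e^{-nC}\le e^{\varphi_n(x)-(S_n\varphi_1)(x)}\le e^{nC}$ uniformly in $x$. Being independent of $x$, this factor pulls out of every infimum, supremum and sum, so each of $q_n,p_n,Q_n,P_n$ for $\Phi$ lies between $e^{-nC}$ and $e^{nC}$ times the corresponding quantity for $\varphi_1$. Applying $\tfrac1{n^s}\log$ produces the additive error $\pm Cn^{1-s}$, which at $s=1$ is the constant $\pm C$ (yielding the two-sided bound) and for $s>1$ tends to $0$ (yielding equality); this is exactly the argument of Theorem \ref{thm3.1} with $\mathbf 0$ replaced by $\varphi_1$.

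For part (3) I would use that $T^{n-1}$ conjugates the two truncated systems. A direct computation gives $d_n^{T^{-1}}(x,y)=d_n(T^{-(n-1)}x,T^{-(n-1)}y)$, so $T^{-(n-1)}$ is an isometry of $(X,d_n^{T^{-1}})$ onto $(X,d_n)$ and carries the $(n,\epsilon)$-separated and $(n,\epsilon)$-spanning sets for $T^{-1}$ bijectively onto those for $T$; since $\varphi_n'(x)=\varphi_n(T^{-(n-1)}x)$, the potential sums are preserved exactly, so $P_n(T^{-1},\Phi',\epsilon)=P_n(T,\Phi,\epsilon)$ and $Q_n(T^{-1},\Phi',\epsilon)=Q_n(T,\Phi,\epsilon)$, giving equality of $PD_2$ and $PD_3$. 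For $i=1,4$ the same homeomorphism, in the form $\bigvee_{i=0}^{n-1}(T^{-1})^{-i}\alpha=T^{n-1}\bigvee_{i=0}^{n-1}T^{-i}\alpha$, maps subcovers to subcovers and preserves each $\inf_B e^{\varphi_n}$ and $\sup_B e^{\varphi_n}$, so $q_n$ and $p_n$ are unchanged and the corresponding dimensions agree; that $\Phi'$ is almost additive for $T^{-1}$, so that these quantities are defined, is Proposition \ref{pro3.2}. The one genuinely delicate point in the whole argument is the upper-cover case of part (1) for $s\le1$, where the failure of refinement monotonicity for $p_n$ forces the detour through separated sets described above.
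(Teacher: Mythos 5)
Your treatment of parts (2) and (3), and of part (1) for $i=1,2,3$, is correct and is essentially the paper's own proof: the same comparisons $Q_n(T^k,\Phi_k,\epsilon)\leq Q_{nk}(T,\Phi,\epsilon)$, $P_n(T^k,\Phi_k,\epsilon)\leq P_{nk}(T,\Phi,\epsilon)$, $q_n(T^k,\Phi_k,\alpha)\leq q_{nk}(T,\Phi,\alpha)$ followed by the subsequence trick $\frac{1}{n^s}=k^s\frac{1}{(nk)^s}$; the iterated inequality $-nC+(S_n\varphi_1)(x)\leq\varphi_n(x)\leq(S_n\varphi_1)(x)+nC$ for part (2); and the conjugation by $T^{n-1}$, via $\bigvee_{i=0}^{n-1}(T^{-1})^{-i}\alpha=T^{n-1}\bigl(\bigvee_{i=0}^{n-1}T^{-i}\alpha\bigr)$ and Proposition \ref{pro3.2}, for part (3). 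You are also right, and this is a good catch, that the paper's closing ``similarly'' for $PD_4$ in part (1) is not actually similar: enlarging a set increases a supremum, so a subcover of the finer cover $\bigvee_{l=0}^{nk-1}T^{-l}\alpha$ does not induce a subcover of the coarser cover $\bigvee_{j=0}^{n-1}T^{-kj}\alpha$ with smaller $\sum\sup$, and the inequality $p_n(T^k,\Phi_k,\alpha)\leq p_{nk}(T,\Phi,\alpha)$ does not follow the way it does for $q_n$.

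However, your substitute argument for $i=4$ does not close this gap in the range $0<s\leq 1$, which is exactly where it is needed (for $s>1$ all four $s$-pressures coincide, as you note, so that case is fine). The correction factor $e^{2nC+n\delta}$ contributes $(2C+\delta)n^{1-s}$ to $\frac{1}{n^s}\log p_n$. Letting $\mathrm{diam}(\gamma)\to 0$ removes only $\delta$; the term $2C$ comes from the almost-additivity defect (comparing $\varphi_{nk}$ with the $T^k$-Birkhoff sums of $\varphi_k$), is independent of the cover, and never disappears. Thus at $s=1$ your route yields at best $PD_4(1,T^k,\Phi_k)\leq 2C+k\,PD_4(1,T,\Phi)$, not the asserted inequality, and for $0<s<1$ the error term diverges, so the route yields nothing at all. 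There is a further wrinkle: $PD_4$ is a supremum over \emph{all} covers, including large-diameter ones for which $\delta$ is not small, and you cannot restrict to small-diameter covers without invoking precisely the refinement monotonicity of $p_n$ that fails. So part (1) for $i=4$ and $s\in(0,1]$ remains unproven in your proposal; to be fair, the paper's own proof does not establish it either, so this is a defect you inherited rather than introduced, but your claim that the $s=1$ case can be handled ``by letting $\mathrm{diam}(\gamma)\to 0$'' is incorrect as stated.
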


\begin{proof}
(1) Denote
\begin{align*}
d_{nk}^T(x,y)=\max\limits_{0\leq i\leq nk-1}d(T^i(x),T^i(y))
\end{align*}
and
\begin{align*}
d_{n}^{T^k}(x,y)=\max\limits_{0\leq i\leq n-1}d(T^{ki}(x),T^{ki}(y)),
\end{align*}
where $x,y\in X$. Then $d_{nk}^T(x,y)\geq d_{n}^{T^k}(x,y)$.

If $F$ is an $(nk,\epsilon)$-spanning set of $X$ for $T$, then $F$
is an $(n,\epsilon)$-spanning set of $X$ for $T^k$. Since
\begin{align*}
Q_{nk}(T,\Phi,\epsilon)=\inf\left\{\sum\limits_{x\in F}e^{\varphi_{nk}(x)}|
F ~{\rm is~ an~}(nk,\epsilon){\rm -spanning~set~of}~X~{\rm for~}T\right\}
\end{align*}
and
\begin{align*}
Q_n(T^k,\Phi_k,\epsilon)=\inf\left\{\sum\limits_{x\in F}e^{\varphi_{nk}(x)}|
F ~{\rm is~ an~}(n,\epsilon){\rm -spanning~set~of}~X~{\rm for~}T^k\right\},
\end{align*}
it follows that $Q_n(T^k,\Phi_k,\epsilon)\leq Q_{nk}(T,\Phi,\epsilon)$.
Furthermore, we have
\begin{align*}
\lim\limits_{\epsilon\to 0}
\limsup\limits_{n\to\infty}\frac{1}{n^s}\log Q_n(T^k,\Phi_k,\epsilon)
\leq k^s\lim\limits_{\epsilon\to 0}
\limsup\limits_{n\to\infty}\frac{1}{(nk)^s}\log Q_{nk}(T,\Phi,\epsilon).
\end{align*}
Therefore, $PD_2(s,T^k,\Phi_k)\leq k^sPD_2(s,T,\Phi)$.

If $E$ is an $(n,\epsilon)$-separated set of $X$ for $T^k$, then $E$ is
an $(nk,\epsilon)$-separated set of $X$ for $T$. Since
\begin{align*}
P_{nk}(T,\Phi,\epsilon)=\sup\left\{\sum\limits_{x\in E}e^{\varphi_{nk}(x)}|
E ~{\rm is~ an~}(n,\epsilon){\rm -separated~set~of}~X~{\rm for~}T\right\}
\end{align*}
and
\begin{align*}
P_n(T^k,\Phi_k,\epsilon)=\sup\left\{\sum\limits_{x\in E}e^{\varphi_{nk}(x)}|
E ~{\rm is~ an~}(n,\epsilon){\rm -separated~set~of}~X~{\rm for~}T^k\right\},
\end{align*}
it follows that $P_n(T^k,\Phi_k,\epsilon)\leq P_{nk}(T,\Phi,\epsilon)$.
Therefore, $PD_3(s,T^k,\Phi_k)\leq k^sPD_3(s,T,\Phi)$.

Let $\alpha\in\mathcal{C}_X$. Then $\bigvee\limits_{i=0}^{nk-1}T^{-i}\alpha$
is finer than $\bigvee\limits_{i=0}^{n-1}T^{-ki}\alpha$. Since
\begin{align*}
q_n(T^k,\Phi_k,\alpha)=\inf\left\{\sum\limits_{B\in\beta}\inf\limits_{x\in B}e^{\varphi_{nk}(x)}|
\beta ~{\rm is~ a~ finite~ subcover~ of}~\bigvee\limits_{i=0}^{n-1}T^{-ki}\alpha\right\}
\end{align*}
and
\begin{align*}
q_{nk}(T,\Phi,\alpha)=\inf\left\{\sum\limits_{B\in\beta}\inf\limits_{x\in B}e^{\varphi_{nk}(x)}|
\beta ~{\rm is~ a~ finite~ subcover~ of}~\bigvee\limits_{i=0}^{nk-1}T^{-i}\alpha\right\},
\end{align*}
it follows that $q_n(T^k,\Phi_k,\alpha)\leq q_{nk}(T,\Phi,\alpha)$. Therefore,
$PD_1(s,T^k,\Phi_k)\leq k^sPD_1(s,T,\Phi)$.
Similarly, we can prove $PD_4(s,T^k,\Phi_k)\leq k^sPD_4(s,T,\Phi)$.

(2) Since $\Phi=(\varphi_n)_{n\in\mathbb N}$ is an almost additive sequence
for $T$, there exists $C>0$ such that
\begin{align*}
-C+\varphi_n(x)+\varphi_m(T^n(x))\leq\varphi_{n+m}(x)\leq\varphi_n(x)+\varphi_m(T^n(x))+C
\end{align*}
for every $n,m\in\mathbb N$ and $x\in X$. Furthermore, we have
\begin{align*}
-nC+\sum\limits_{i=0}^{n-1}\varphi_1(T^i(x))\leq\varphi_n(x)\leq\sum\limits_{i=0}^{n-1}\varphi_1(T^i(x))+nC,
\end{align*}
i.e., $-nC+(S_n\varphi_1)(x)\leq\varphi_n(x)\leq (S_n\varphi_1)(x)+nC$. Therefore,
\begin{align}\label{3.10}
e^{-nC+(S_n\varphi_1)(x)}\leq e^{\varphi_n(x)}\leq e^{(S_n\varphi_1)(x)+nC}.
\end{align}
This implies that 
\begin{itemize}
\item if $s=1,$ 
\begin{align*}
-C+PD_i(s,T,\varphi_1)\leq PD_i(s,T,\Phi)\leq PD_i(s,T,\varphi_1)+C,~ i=1,2,3,4;
\end{align*}
\item if $s>1,$
\begin{align*}
PD_i(s,T,\varphi_1)= PD_i(s,T,\Phi),~ i=1,2,3,4.
\end{align*}
\end{itemize}

(3) Denote $\Phi'=(\varphi_n')_{n\in\mathbb N}$, where $\varphi_n'=\varphi_n\circ T^{-(n-1)}$.
Since
\begin{align*}
d_n(x,y)=\max\limits_{0\leq i<n}d(T^i(x),T^i(y))
=\max\limits_{0\leq i<n}d(T^{-i}(T^{n-1}(x)),T^{-i}(T^{n-1}(y)))
\end{align*}
for every $x,y\in X$. Hence, $E$ is an $(n,\epsilon)$-separated for $T$ if and only if
$T^{n-1}(E)$ is an $(n,\epsilon)$-separated set for $T^{-1}$. Note that
\begin{align*}
\sum\limits_{x\in E}e^{\varphi_n(x)}=\sum\limits_{y\in T^{n-1}(E)}e^{(\varphi_n\circ T^{-(n-1)})(y)}
=\sum\limits_{y\in T^{n-1}(E)}e^{\varphi_n'(y)}.
\end{align*}
Therefore, $Q_n(T^{-1},\Phi',\epsilon)=Q_n(T,\Phi,\epsilon)$, which implies
$PD_2(s,T^{-1},\Phi')=PD_2(s,T,\Phi)$. Similarly, we can prove
$PD_3(s,T^{-1},\Phi')=PD_3(s,T,\Phi)$.

Let $\alpha\in\mathcal{C}_X$ and $\beta$ be a subcover of $\bigvee\limits_{i=0}^{n-1}T^{-i}\alpha$.
Let $B\in\beta$. Note that
\begin{align*}
\inf\limits_{x\in B}e^{\varphi_n(x)}=\inf\limits_{y\in T^{n-1}(B)}e^{(\varphi_n\circ T^{-(n-1)})(y)}
=\inf\limits_{y\in T^{n-1}(B)}e^{\varphi_n'(y)}.
\end{align*}
Since
\begin{align*}
\bigvee\limits_{i=0}^{n-1}(T^{-1})^{-i}\alpha=\bigvee\limits_{i=0}^{n-1}T^{-i+(n-1)}\alpha
=T^{n-1}(\bigvee\limits_{i=0}^{n-1}T^{-i}\alpha),
\end{align*}
it follows that
\begin{align*}
q_n(T^{-1},\Phi',\alpha)&=\inf\left\{\sum\limits_{B\in\beta}\inf\limits_{x\in B}e^{\varphi_n'(x)}|
\beta ~{\rm is~ a~ finite~ subcover~ of}~\bigvee\limits_{i=0}^{n-1}(T^{-1})^{-i}\alpha\right\}\\
&=\inf\left\{\sum\limits_{B\in\beta}\inf\limits_{y\in T^{n-1}(B)}e^{\varphi_n(y)}|
\beta ~{\rm is~ a~ finite~ subcover~ of}~\bigvee\limits_{i=0}^{n-1}T^{-i}\alpha\right\}.
\end{align*}
The above equalities imply that
\begin{align*}
q_n(T^{-1},\Phi',\alpha)=\inf\left\{\sum\limits_{B\in\beta}\inf\limits_{x\in B}e^{\varphi_n(x)}|
\beta ~{\rm is~ a~ finite~ subcover~ of}~\bigvee\limits_{i=0}^{n-1}T^{-i}\alpha\right\}=q_n(T,\Phi,\alpha),
\end{align*}
which shows $PD_1(s,T^{-1},\Phi')=PD_1(s,T,\Phi)$. The case $i=4$ can be proved similarly.
\end{proof}
Let $(X,T_1)$ and $(Y,T_2)$ be two topological dynamical systems. Then,
$(X,T_1)$ is an extension of $(Y,T_2)$, or $(Y,T_2)$ is a factor of
$(X,T_1)$ if there exists a surjective continuous map $\pi: X\to Y$
(called a factor map) such that $\pi\circ T_1(x)=T_2\circ\pi(x)$ for
every $x\in X$. If $\pi$ is a homeomorphism, then $(X,T_1)$
and $(Y,T_2)$ are said to be topologically conjugate and the
homeomorphism $\pi$ is called a conjugate map.

\begin{thm}\label{thm3.5}
Let $(X,d_1,T_1)$ and $(Y,d_2,T_2)$ be two topological dynamical systems.
Let $\Phi=(\varphi_n)_{n\in\mathbb N}$ be an almost additive sequence
for $T_2$.
If $(Y,T_2)$ is a factor of $(X,T_1)$ with a factor map $\pi:X\to Y$, then
$PD_2(s,T_1,\Phi\circ\pi)\geq PD_2(s,T_2,\Phi)$ for any $s>0$, where
$\Phi\circ\pi=(\varphi_n\circ\pi)_{n\in\mathbb N}$. Moreover,
If $(X,T_1)$ and $(Y,T_2)$ are topologically conjugate with a conjugate
map $\pi:X\to Y$, then $PD_2(s,T_1,\Phi\circ\pi)=PD_2(s,T_2,\Phi)$
for any $s>0$.
\end{thm}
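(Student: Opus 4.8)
The plan is to first confirm that $\Phi\circ\pi$ really is an almost additive sequence for $T_1$ (so that $PD_2(s,T_1,\Phi\circ\pi)$ is even defined), and then to transport spanning sets from $X$ down to $Y$ through the factor map in order to compare the quantities $Q_n$. First I would record the iterated intertwining relation $\pi\circ T_1^{\,j}=T_2^{\,j}\circ\pi$, which follows from $\pi\circ T_1=T_2\circ\pi$ by induction. Substituting $y=\pi(x)$ into the almost additivity inequality (\ref{2.1}) for $\Phi$ and using $\varphi_m(T_2^{\,n}(\pi x))=\varphi_m(\pi(T_1^{\,n}x))=(\varphi_m\circ\pi)(T_1^{\,n}x)$ shows at once that $\Phi\circ\pi=(\varphi_n\circ\pi)_{n\in\mathbb N}$ satisfies (\ref{2.1}) for $T_1$ with the same constant $C$.

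The core step is the transport argument for spanning sets. Since $X$ is compact, $\pi$ is uniformly continuous, so for every $\epsilon>0$ I can choose $\delta=\delta(\epsilon)>0$, arranged so that $\delta(\epsilon)\to 0$ as $\epsilon\to 0$, with the property that $d_1(x,x')<\delta$ forces $d_2(\pi x,\pi x')<\epsilon$. Because $\pi(T_1^{\,j}x)=T_2^{\,j}(\pi x)$, this upgrades to the Bowen metrics: $d_n^{T_1}(x,x')<\delta$ implies $d_n^{T_2}(\pi x,\pi x')<\epsilon$. Consequently, if $F$ is an $(n,\delta)$-spanning set of $X$ for $T_1$, then writing an arbitrary $y\in Y$ as $y=\pi(x)$ (using surjectivity of $\pi$) and picking $z\in F$ with $d_n^{T_1}(x,z)<\delta$, we get $d_n^{T_2}(y,\pi z)<\epsilon$, so $\pi(F)$ is an $(n,\epsilon)$-spanning set of $Y$ for $T_2$. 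Comparing weighted sums then gives
\begin{align*}
Q_n(T_2,\Phi,\epsilon)\leq\sum_{y\in\pi(F)}e^{\varphi_n(y)}\leq\sum_{x\in F}e^{\varphi_n(\pi x)}=\sum_{x\in F}e^{(\varphi_n\circ\pi)(x)},
\end{align*}
where the middle inequality holds because every $y\in\pi(F)$ has at least one preimage in $F$ and all summands are positive. Taking the infimum over all such $F$ yields $Q_n(T_2,\Phi,\epsilon)\leq Q_n(T_1,\Phi\circ\pi,\delta)$.

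From here I would apply $\tfrac{1}{n^s}\log(\cdot)$, then $\limsup_{n\to\infty}$, and finally let $\epsilon\to 0$. The one point that requires genuine care, and which I expect to be the main obstacle, is this last limit interchange, since the cutoff $\delta$ on the right depends on $\epsilon$. The resolution is monotonicity: the map $\epsilon\mapsto Q_n(T,\Phi,\epsilon)$ is non-increasing (shrinking $\epsilon$ only restricts the admissible spanning sets), so $\limsup_n\tfrac{1}{n^s}\log Q_n$ is monotone in $\epsilon$ and $\lim_{\epsilon\to 0}$ exists, equalling the supremum over $\epsilon$. Because $\delta(\epsilon)\to 0$, letting $\epsilon\to 0$ sends the left-hand side to $PD_2(s,T_2,\Phi)$ and the right-hand side to $PD_2(s,T_1,\Phi\circ\pi)$, giving $PD_2(s,T_2,\Phi)\leq PD_2(s,T_1,\Phi\circ\pi)$, which is the asserted inequality.

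For the conjugacy statement I would simply symmetrize. When $\pi$ is a homeomorphism, $\pi^{-1}\colon Y\to X$ satisfies $\pi^{-1}\circ T_2=T_1\circ\pi^{-1}$, so $(X,T_1)$ is a factor of $(Y,T_2)$ via $\pi^{-1}$. Applying the factor inequality already proved, now with base system $(X,T_1)$, the almost additive sequence $\Psi:=\Phi\circ\pi$ for $T_1$, and factor map $\pi^{-1}$, gives $PD_2(s,T_2,\Psi\circ\pi^{-1})\geq PD_2(s,T_1,\Psi)$. Since $\Psi\circ\pi^{-1}=\Phi\circ\pi\circ\pi^{-1}=\Phi$, this is exactly the reverse inequality $PD_2(s,T_2,\Phi)\geq PD_2(s,T_1,\Phi\circ\pi)$, and combining the two inequalities yields the equality $PD_2(s,T_1,\Phi\circ\pi)=PD_2(s,T_2,\Phi)$.
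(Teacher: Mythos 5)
Your proposal is correct and follows essentially the same route as the paper: transport $(n,\delta)$-spanning sets through $\pi$ using uniform continuity and surjectivity to obtain $Q_n(T_2,\Phi,\epsilon)\leq Q_n(T_1,\Phi\circ\pi,\delta)$, then symmetrize with $\pi^{-1}$ for the conjugacy case. You are in fact slightly more careful than the paper at two points: you state the sum comparison as an inequality (correctly handling possible non-injectivity of $\pi$ on $F$, where the paper writes an equality), and you justify the $\epsilon\to 0$ limit interchange via monotonicity of $Q_n$ in $\epsilon$, which the paper leaves implicit.
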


\begin{proof}
If $(Y,T_2)$ is a factor of $(X,T_1)$ with a factor map $\pi:X\to Y$,
then $\pi\circ T_1(x)=T_2\circ\pi(x)$ for every $x\in X$. We first
prove that if $\Phi=(\varphi_n)_{n\in\mathbb N}$ is
an almost additive sequence for $T_2$, then
$\Phi\circ\pi=(\varphi_n\circ\pi)_{n\in\mathbb N}$ is
an almost additive sequence for $T_1$.  In fact, since $\Phi=(\varphi_n)_{n\in\mathbb N}$ is
an almost additive sequence with respect to $T_2$   there exists $C>0$ such that
\begin{align*}
-C+\varphi_n(y)+\varphi_m(T_2^n(y))\leq\varphi_{n+m}(y)\leq\varphi_n(y)+\varphi_m(T_2^n(y))+C
\end{align*}
for every $n,m\in\mathbb N$  and $y\in Y$. Note that for any $x\in X,$ there exists $y\in Y$
such that $\pi(x)=y.$ Furthermore,
\begin{align*}
-C +\varphi_n(\pi(x))+\varphi_m(T_2^n(\pi(x)))\leq
\varphi_{n+m}(\pi(x))\leq\varphi_n(\pi(x))+\varphi_m(T_2^n(\pi(x)))+C.
\end{align*}
Therefore, we have
\begin{align*}
-C +\varphi_n\circ\pi(x) +\varphi_m\circ\pi(T_1^n(x)) \leq
\varphi_{n+m}\circ\pi(x) \leq\varphi_n\circ\pi(x) +\varphi_m\circ\pi(T_1^n(x))+C ,
\end{align*}
which implies $\Phi\circ\pi=(\varphi_n\circ\pi)_{n\in\mathbb N}$ is an almost
additive sequence for $T_1$.

\noindent Since $X $ is compact and $\pi$ is a continuous map, then for any
given $\epsilon>0$, there exists $\delta>0$ such that $d_1(x,y)<\delta$
satisfying $d_2(\pi(x),\pi(y))<\epsilon$. Combining this and  $\pi$ is surjective, if $F$ is
an $(n,\delta)$-spanning set of $X$ for $T_1$, then $\pi(F)$ is
an $(n,\epsilon)$-spanning set of $Y$ for $T_2$. Thus, we have
\begin{align*}
\sum\limits_{x\in F}e^{(\varphi_n\circ\pi)(x)}=
\sum\limits_{y\in\pi(F)}e^{\varphi_n(y)},
\end{align*}
which implies that $Q_n(T_1,\Phi\circ\pi,\delta)\geq Q_n(T_2,\Phi,\epsilon)$. Therefore,
$PD_2(s,T_1,\Phi\circ\pi)\geq PD_2(s,T_2,\Phi).$

If $\pi$ is a conjugate map, then we can use $T_2,T_1,\pi^{-1},\Phi\circ\pi$
instead of the above with $T_1,T_2,\pi,\Phi$ respectively, which implies that
$PD_2(s,T_2,\Phi)\geq PD_2(s,T_1,\Phi\circ\pi)$. Therefore,
$PD_2(s,T_1,\Phi\circ\pi)=PD_2(s,T_2,\Phi).$
 
\end{proof}

%%%%%%%%%%%%%%%%%%%%%%%%%%%%%%%%%%%%%%%%%%%%%%%%%%%%%%%%%%%%%%%%%%%%%%%%%%%%%%%%
%%%%%%%%%%%%%%%%%%%%%%%%%%%%%%%%%%%%%%%%%%%%%%%%%%%%%%%%%%%%%%%%%%%%%%%%%%%%%%%%
%%%%%%%%%%%%%%%%%%%%%%%%%%%%%%%%%%%%%%%%%%%%%%%%%%%%%%%%%%%%%%%%%%%%%%%%%%%%%%%%
%%%%%%%%%%%%%%%%%%%%%%%%%%%%%%%%%%%%%%%%%%%%%%%%%%%%%%%%%%%%%%%%%%%%%%%%%%%%%%%%
\section{Topological pressure dimension for almost additive potentials}

%%%%%%%%%%%%%%%%%%%%%%%%%%%%%%%%%%%%%%%%%%%%%%%%%%%%%%%%%%%%%%%%%%%%%%%%%
%%%%%%%%%%%%%%%%%%%%%%%%%%%%%%%%%%%%%%%%%%%%%%%%%%%%%%%%%%%%%%%%%%%%%%%%%
%%%%%%%%%%%%%%%%%%%%%%%%%%%%%%%%%%%%%%%%%%%%%%%%%%%%%%%%%%%%%%%%%%%%%%%%%
%%%%%%%%%%%%%%%%%%%%%%%%%%%%%%%%%%%%%%%%%%%%%%%%%%%%%%%%%%%%%%%%%%%%%%%%%
\qquad In this section, we discuss the relation between $D(T)$ and
$PD_i(T,\Phi)$, $i=1,2,3,4$ for certain special almost additive
sequence $\Phi$. Moreover, we give some examples on topological
pressure dimension for almost additive sequences.

\begin{thm}\label{thm4.1}
Let $(X,T)$ be a topological dynamical system and $\Phi=(\varphi_n)_{n\in\mathbb N}$
be an almost additive sequence. If $D(T)<1$, then $PD_i(T,\Phi)\leq 1$, $i=1,2,3,4$.
\end{thm}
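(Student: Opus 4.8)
The plan is to work directly from the definition
$PD_i(T,\Phi)=\inf\{s>0:PD_i(s,T,\Phi)=0\}$ and to show that every $s>1$ already belongs to the defining set. Indeed, if $PD_i(s,T,\Phi)=0$ for all $s>1$, then the infimum above is at most $1+\eta$ for every $\eta>0$, whence $PD_i(T,\Phi)\le 1$. So the whole theorem reduces to a single claim: for each $i\in\{1,2,3,4\}$ and each $s>1$ one has $PD_i(s,T,\Phi)=0$.

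The key input is Theorem \ref{thm3.1}(3), which asserts that $PD_i(s,T,\Phi)=D(s,T)$ for all $i$ whenever $s>1$. This identity removes the potential $\Phi$ from the problem entirely and transfers the question to the purely entropic quantity $D(s,T)$. Thus it suffices to prove that $D(s,T)=0$ for every $s>1$, after which the conclusion for all four indices follows at once.

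This last statement is exactly where the hypothesis $D(T)<1$ enters. The function $s\mapsto D(s,T)$ obeys the same jump dichotomy as in Proposition \ref{pro2.1}: it is non-increasing in $s$ (because $n^{-s}$ decreases as $s$ grows) and, past its critical value, takes only the values $+\infty$ and $0$; the critical value is precisely $D(T)=\inf\{s>0:D(s,T)=0\}$. Since $D(T)<1$, every $s>1$ satisfies $s>D(T)$ and therefore $D(s,T)=0$. Combining this with Theorem \ref{thm3.1}(3) gives $PD_i(s,T,\Phi)=0$ for all $s>1$ and all $i$, which is the reduced claim of the first paragraph, and the theorem follows.

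I expect the only delicate point to be the justification that $D(T)<1$ forces $D(s,T)=0$ for every $s>1$, i.e.\ the monotone jump behavior of $s\mapsto D(s,T)$; everything else is a direct substitution into Theorem \ref{thm3.1}(3). As a convenience one could instead invoke the chain (\ref{2.2}), namely $PD_1(T,\Phi)\le PD_2(T,\Phi)\le PD_3(T,\Phi)\le PD_4(T,\Phi)$, and carry out the argument only for $PD_4$; but since Theorem \ref{thm3.1}(3) already treats all four indices simultaneously, there is no real saving in doing so.
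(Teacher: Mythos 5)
Your proof is correct, but it takes a different route through Theorem \ref{thm3.1} than the paper does: you use part (3), the paper uses part (2). The paper's argument works at the single point $s=1$: since $D(T)<1$ forces $D(1,T)=0$, the bound of Theorem \ref{thm3.1}(2) gives $PD_i(1,T,\Phi)\leq D(1,T)+\sup\varphi_1+C<+\infty$, and from finiteness of the $1$-pressure it concludes the dimension bound (the printed proof actually ends with ``$PD_i(T,\Phi)\geq 1$'', an evident typo for $\leq 1$). Your argument instead works at every $s>1$: the identity $PD_i(s,T,\Phi)=D(s,T)$ of Theorem \ref{thm3.1}(3), combined with the fact that $D(T)<1$ forces $D(s,T)=0$ for all $s>1$, gives $PD_i(s,T,\Phi)=0$ for every $s>1$, and the $\inf$-characterization $PD_i(T,\Phi)=\inf\{s>0:PD_i(s,T,\Phi)=0\}$ finishes immediately. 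What your route buys: the final step is a direct reading of the definition, whereas the paper's passage from ``$PD_i(1,T,\Phi)<+\infty$'' to ``$PD_i(T,\Phi)\leq 1$'' tacitly needs the extra observation that $PD_i(s',T,\Phi)=+\infty$ for some $s'>1$ would already force $PD_i(1,T,\Phi)=+\infty$ (compare $n^{-s'}$ with $n^{-1}$ along a subsequence where the $\limsup$ blows up, using positivity of the logarithms there). What the paper's route buys: it needs only a one-sided estimate at one value of $s$, not the exact identity of part (3). One small imprecision on your side: the non-increase of $s\mapsto D(s,T)$ does not follow from the decrease of $n^{-s}$ alone; it also uses $\log s(n,\epsilon)\geq 0$ (which holds since spanning sets are nonempty, so $s(n,\epsilon)\geq 1$). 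This matters in principle, because the analogous monotonicity can fail for $PD_i(s,T,\Phi)$ itself when the quantities $\log q_n$ are negative --- precisely why Proposition \ref{pro2.1} states monotonicity as a hypothesis rather than a fact.
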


\begin{proof}
Since $D(T)<1$, we have $D(1,T)=0$. By (2) of Theorem \ref{thm3.1},
\begin{align*}
PD_i(1,T,\Phi)\leq D(1,T)+\sup\varphi_1+C,
\end{align*}
which implies that $PD_i(1,T,\Phi)\leq\sup\varphi_1+C<+\infty$.
Therefore, $PD_i(T,\Phi)\geq 1$ by the
definitions of $PD_i(T,\Phi)$ $(i=1,2,3,4)$.
\end{proof}
Let $A$ be a positive constant  and $\Phi=(nA)_{n\in\mathbb N}$. It is obvious that
$\Phi=(nA)_{n\in\mathbb N}$ is an almost additive sequence.
\begin{thm}\label{thm4.2}
Let $(X,T)$ be a topological dynamical system and $s>0$. If $\Phi=(nA)_{n\in\mathbb N},A>0$,
then
\begin{eqnarray}\label{4.11}
PD_i(s,T,\Phi)=\left\{
\begin{array}{ll}
D(s,T),
&\mbox{\rm if} ~s>1,
\\
D(s,T)+A,
&\mbox{\rm if} ~s=1,
\\
\;
+\infty,&\mbox{\rm if}~0\leq s<1

~~~~~~~~~~~~~~~~~~~~~~~~~~~
\end{array}
\right.
\end{eqnarray}
for $i=1,2,3,4$.
\end{thm}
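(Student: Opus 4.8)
The plan is to compute $PD_i(s,T,\Phi)$ for the constant-additive sequence $\Phi=(nA)_{n\in\mathbb N}$ by exploiting the fact that $\varphi_n\equiv nA$ is spatially constant, so the weighting factor $e^{\varphi_n(x)}=e^{nA}$ factors completely out of every sum, infimum, and supremum. Concretely, since $\inf_{x\in B}e^{\varphi_n(x)}=\sup_{x\in B}e^{\varphi_n(x)}=e^{nA}$ and $e^{\varphi_n(x)}=e^{nA}$ for every point $x$, each of the four defining quantities reduces to $e^{nA}$ times a purely combinatorial count: $q_n(T,\Phi,\alpha)=p_n(T,\Phi,\alpha)=e^{nA}\mathcal N(\bigvee_{i=0}^{n-1}T^{-i}\alpha)$, while $Q_n(T,\Phi,\epsilon)=e^{nA}s(n,\epsilon)$ and $P_n(T,\Phi,\epsilon)=e^{nA}r(n,\epsilon)$. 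Taking logarithms gives $\log q_n=nA+\log\mathcal N(\cdots)$ and similarly for the others, so in every case
\begin{align*}
\frac{1}{n^s}\log(\text{quantity})=\frac{nA}{n^s}+\frac{1}{n^s}\log(\text{count}).
\end{align*}

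With this decomposition in hand I would treat the three regimes separately, in each case using that the count-term limsup is exactly $D(s,T)$ by the definitions of $s$-topological entropy recalled in Section 3. For $s>1$, the factor $nA/n^s=A/n^{s-1}\to 0$, so the first term vanishes in the limit and one recovers $PD_i(s,T,\Phi)=D(s,T)$; this is also immediate from part (3) of Theorem \ref{thm3.1} since $\sup\varphi_1=\inf\varphi_1=A$ collapses the bounds there. For $s=1$, the factor is exactly $A$ for every $n$, a constant that passes through the $\limsup$ unchanged, yielding $PD_i(1,T,\Phi)=A+D(1,T)$; again this matches part (2) of Theorem \ref{thm3.1}, whose upper and lower bounds both equal $D(1,T)+A$ when $\varphi_1\equiv A$ and $C$ may be taken arbitrarily small (indeed $C=0$ works for a genuinely additive constant sequence). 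For $0\le s<1$, the factor $nA/n^s=An^{1-s}\to+\infty$; since the count-term $\frac{1}{n^s}\log(\text{count})$ is nonnegative (each count is at least $1$), the sum diverges to $+\infty$, giving $PD_i(s,T,\Phi)=+\infty$.

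The one point demanding care is the $0\le s<1$ case, where I am adding a term tending to $+\infty$ to a $\limsup$ of a possibly unbounded sequence; I must confirm that the divergence of $An^{1-s}$ is not an artefact that could be cancelled. It cannot: because the counts $\mathcal N$, $s(n,\epsilon)$, $r(n,\epsilon)$ are all $\ge 1$, their logarithms are $\ge 0$, so $\frac{1}{n^s}\log(\text{quantity})\ge An^{1-s}\to+\infty$ term by term, forcing the $\limsup$ to be $+\infty$ regardless of the entropy behaviour of $T$. The equality across $i=1,2,3,4$ then follows because the four combinatorial counts differ only by the standard comparisons $s(n,\epsilon)\le r(n,\epsilon)\le s(n,\epsilon/2)$ and the cover/Lebesgue-number relations recorded before inequality (\ref{2.2}), all of which are absorbed into the common limit $D(s,T)$ and none of which affect the dominant $An^{1-s}$ term. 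Thus the main obstacle is merely bookkeeping the three regimes cleanly; there is no genuine analytic difficulty once the constancy of $\varphi_n$ is used to factor out $e^{nA}$.
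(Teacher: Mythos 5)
Your proposal is correct, and its core is the same as the paper's: since $\varphi_n\equiv nA$ is constant in $x$, the factor $e^{nA}$ pulls out of every defining sum, leaving $\frac{1}{n^s}\log(\cdot)=An^{1-s}+\frac{1}{n^s}\log(\mbox{count})$, and the three regimes $s>1$, $s=1$, $0\leq s<1$ are read off exactly as in the paper; your explicit remark that the count terms are $\geq 1$, so their logarithms are nonnegative and cannot cancel the divergence of $An^{1-s}$ when $s<1$, is a point the paper leaves implicit. Where you genuinely differ is the treatment of $i=2,3$: the paper computes only the cover quantities, obtaining $q_n(T,\Phi,\alpha)=p_n(T,\Phi,\alpha)=e^{nA}\mathcal{N}(\bigvee_{i=0}^{n-1}T^{-i}\alpha)$, concludes $PD_1(s,T,\Phi)=PD_4(s,T,\Phi)$, and then squeezes $PD_2(s,T,\Phi)$ and $PD_3(s,T,\Phi)$ between them via the chain of inequalities underlying (\ref{2.2}); you instead compute $Q_n(T,\Phi,\epsilon)=e^{nA}s(n,\epsilon)$ and $P_n(T,\Phi,\epsilon)=e^{nA}r(n,\epsilon)$ directly, so these two cases follow purely from the definition of $D(s,T)$ recalled in Section 3, with no appeal to the comparison inequalities. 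Your route is more uniform and self-contained for $i=2,3$, while the paper's sandwich is slightly more economical (only two of the four quantities are ever computed). Note that for $i=1,4$ both arguments still need the identification $\sup_{\alpha\in\mathcal{C}_X}\limsup_{n\to\infty}\frac{1}{n^s}\log\mathcal{N}(\bigvee_{i=0}^{n-1}T^{-i}\alpha)=D(s,T)$, i.e.\ Theorem \ref{thm3.1}(1), since Section 3 defines $D(s,T)$ only through spanning and separated sets; this reliance is implicit in the paper and in your write-up alike, and is not a gap.
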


\begin{proof}
We first prove that the cases $i=1,4$. Let $\alpha\in\mathcal{C}_X$ and
$\beta$ is a subcover of $\bigvee\limits_{i=0}^{n-1}T^{-i}(\alpha)$. Let
$B\in\beta$. Note that
$\inf\limits_{x\in B}e^{\varphi_n(x)}=\inf\limits_{x\in B}e^{nA}$, which
implies that
\begin{align*}
q_n(T,\Phi,\alpha)&=\inf\left\{\sum\limits_{B\in\beta}\inf\limits_{x\in B}e^{\varphi_n(x)}|
\beta ~{\rm is~ a~ finite~ subcover~ of}~\bigvee\limits_{i=0}^{n-1}T^{-i}\alpha\right\}\\
&=e^{nA}\mathcal{N}(\bigvee\limits_{i=0}^{n-1}T^{-i}\alpha)
\end{align*}
and
\begin{align*}
p_n(T,\Phi,\alpha)&=\inf\left\{\sum\limits_{B\in\beta}\sup\limits_{x\in B}e^{\varphi_n(x)}|
\beta ~{\rm is~ a~ finite~ subcover~ of}~\bigvee\limits_{i=0}^{n-1}T^{-i}\alpha\right\}\\
&=e^{nA}\mathcal{N}(\bigvee\limits_{i=0}^{n-1}T^{-i}\alpha).
\end{align*}
Therefore,
\begin{align*}
PD_1(s,T,\Phi)=PD_4(s,T,\Phi)=\sup\limits_{\alpha\in\mathcal C_X}\limsup\limits_{n\to\infty}
\frac{1}{n^s}(\log\mathcal{N}(\bigvee\limits_{i=0}^{n-1}T^{-i}\alpha)+nA).
\end{align*}
Furthermore, we have
\begin{itemize}
   \item if $s>1$, $PD_1(s,T,\Phi)=PD_4(s,T,\Phi)=D(s,T)$,
   \item if $s=1$, $PD_1(s,T,\Phi)=PD_4(s,T,\Phi)=D(s,T)+A$,
   \item if $0\leq s<1$, $PD_1(s,T,\Phi)=PD_4(s,T,\Phi)=+\infty$.
\end{itemize}
This shows that (\ref{4.11}) holds for $i=1,4$. It follows from the inequality (\ref{2.2}) that  (\ref{4.11}) holds for $i=2,3$.
\end{proof}

\begin{thm}\label{thm4.3}
Let $(X,T)$ be a topological dynamical system and $\Phi=(nA)_{n\in\mathbb N},A\geq 0$.
Then
\begin{description}
\item[(1)]
If $A=0$, then $PD_i(T,\mathbf{0})=D(T)$, $i=1,2,3,4$.
\item[(2)]
If $A>0$ and $0\leq D(T)\leq 1$, then $PD_i(T,\Phi)=1$, $i=1,2,3,4$.
\item[(3)]
If $A>0$ and $D(T)>1$, then $PD_i(T,\Phi)=D(T)$, $i=1,2,3,4$.
\end{description}
\end{thm}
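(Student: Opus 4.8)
The plan is to deduce all three statements from the explicit piecewise description of $s\mapsto PD_i(s,T,\Phi)$ already established in Theorem \ref{thm4.2}, together with the elementary fact that, by the very definition of the entropy dimension, $D(s,T)=+\infty$ for $0<s<D(T)$ and $D(s,T)=0$ for $s>D(T)$. Since $PD_i(T,\Phi)$ is by definition the exponent at which $PD_i(s,T,\Phi)$ jumps from $+\infty$ to $0$, it will suffice to locate that jump in each of the three regimes; no new estimates beyond Theorem \ref{thm4.2} are needed.

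Part (1) is immediate: when $A=0$ we have $\Phi=\mathbf 0$, and Theorem \ref{thm3.1}(1) gives $PD_i(s,T,\mathbf 0)=D(s,T)$ for every $s>0$ and every $i$. Two functions that agree identically have the same critical exponent, so $PD_i(T,\mathbf 0)=\sup\{s>0:D(s,T)=\infty\}=D(T)$.

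For Part (2) I would assume $A>0$ and $D(T)\leq 1$. By Theorem \ref{thm4.2}, $PD_i(s,T,\Phi)=+\infty$ for every $0<s<1$. For $s>1$ one has $s>1\geq D(T)$, hence $D(s,T)=0$, and Theorem \ref{thm4.2} gives $PD_i(s,T,\Phi)=D(s,T)=0$. Thus the $s$-pressure equals $+\infty$ on $(0,1)$ and $0$ on $(1,+\infty)$; no matter what finite or infinite value $PD_i(1,T,\Phi)=D(1,T)+A$ takes at the single point $s=1$, both $\sup\{s>0:PD_i(s,T,\Phi)=\infty\}$ and $\inf\{s>0:PD_i(s,T,\Phi)=0\}$ equal $1$, so $PD_i(T,\Phi)=1$ for $i=1,2,3,4$. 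For Part (3) I would assume $A>0$ and $D(T)>1$. Again $PD_i(s,T,\Phi)=+\infty$ on $(0,1)$; at $s=1$ the strict inequality $1<D(T)$ forces $D(1,T)=+\infty$, so $PD_i(1,T,\Phi)=D(1,T)+A=+\infty$ as well; for $1<s<D(T)$ one has $D(s,T)=+\infty$, whence $PD_i(s,T,\Phi)=+\infty$; and for $s>D(T)$ one has $D(s,T)=0$, so $PD_i(s,T,\Phi)=0$. Consequently the $s$-pressure is $+\infty$ on all of $(0,D(T))$ and $0$ on $(D(T),+\infty)$, its jump occurs exactly at $D(T)$, and $PD_i(T,\Phi)=D(T)$.

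The only place demanding care is the behavior precisely at the jump exponents $s=1$ (in Part 2) and $s=D(T)$ (in Part 3), where $D(s,T)$ may a priori be finite, zero, or infinite. This is not a genuine obstacle: since the pressure dimension is extracted as a supremum over $\{PD_i(s,T,\Phi)=\infty\}$ and equivalently as an infimum over $\{PD_i(s,T,\Phi)=0\}$, a single boundary value cannot alter either extremal, so the case analysis above pins down $PD_i(T,\Phi)$ unambiguously in every regime.
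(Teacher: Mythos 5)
Your proof is correct and follows essentially the same route as the paper's: both deduce all three parts from the piecewise formula of Theorem \ref{thm4.2} combined with the jump dichotomy $D(s,T)=\infty$ for $s<D(T)$ and $D(s,T)=0$ for $s>D(T)$, sandwiching the critical exponent from both sides. Your extra care at the boundary points $s=1$ and $s=D(T)$ is a minor refinement (the paper glosses over these), not a different argument.
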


\begin{proof}
(1) If $A=0$, then $\Phi=\mathbf{0}$, which implies that $PD_i(s,T,\mathbf{0})=D(s,T)$
by Theorem \ref{thm3.1}. Therefore,
\begin{align*}
PD_i(T,\mathbf{0})=D(T),~(i=1,2,3,4).
\end{align*}

(2) Assume $0\leq D(T)\leq 1$. If $0\leq s<1$, then $PD_i(s,T,\Phi)=+\infty$ by (\ref{4.11}), which
implies that $PD_i(T,\Phi)\geq s$. Thus, $PD_i(T,\Phi)\geq 1$. Moreover, if $s>1$, then
$D(s,T)=0$. Furthermore, we have $PD_i(s,T,\Phi)=0$ by (\ref{4.11}), which implies that
$PD_i(T,\Phi)\leq 1$. Therefore,
\begin{align*}
PD_i(T,\Phi)=1, ~i=1,2,3,4.
\end{align*}

(3) Assume $D(T)>1$. If $0<s<D(T)$, then $D(s,T)=\infty$. Hence,
we have $PD_i(s,T,\Phi)=\infty$ by (\ref{4.11}), which implies that $PD_i(T,\Phi)\geq s$.
Again, if $1\leq D(T)\leq s,$ then $D(s,T)=0$. Furthermore, by (\ref{4.11}), we have
$PD_i(s,T,\Phi)=0$, which implies that $PD_i(T,\Phi)\leq s$. Therefore,
\begin{align*}
PD_i(T,\Phi)=D(T), ~i=1,2,3,4.
\end{align*}

\end{proof}

\begin{exmp}
Let $(\Sigma_2,\sigma)$ be a one-sided symbolic
dynamical system, where $\Sigma_2=\{x=(x_n)_{n=0}^{\infty}: x_n\in
\{0,1\}~\mbox{for every}~ n\}$,
$\sigma(x_0,x_1,x_2,\cdots)=(x_1,x_2,\cdots)$. Let $\Phi=(nA)_{n\in\mathbb N}$ and $A>0$.
Then $PD_{i}(T,\Phi)=1$, $i=1,2,3,4$.
\end{exmp}

Considering $\{0,1\}$ as a discrete space and putting product
topology on $\Sigma_2$, an admissible metric $\rho$ on
the space $\Sigma_2$ is defined by
\begin{eqnarray*}
\rho(x,y)=\sum\limits_{n=0}^{\infty}\frac{d(x_n,y_n)}{2^n},
\end{eqnarray*}
where \begin{eqnarray*}
d(x_n,y_n)=\left\{
\begin{array}{ll}
 0,
&\mbox{if} ~ x_n=y_n,
\\
\;
 \\
 1,&\mbox{if}~x_n\neq y_n,
~~~~~~~~~~~~~~~~~~~~~~~~~~~
\end{array}
\right.
\end{eqnarray*}
for $x=(x_0,x_1,\cdots),~y=(y_0,y_1,\cdots)\in\Sigma_2$.
By Robinson {\rm\cite{Robinson-book}},
$\Sigma_2$ is a compact metric space. From \cite{Cheng-Li},
$D(T)=1$. Hence, by (2) of Theorem \ref{thm4.3}, we have
$PD_i(T,\Phi)=1$, $i=1,2,3,4$.

\begin{exmp}
Let $(X,T)$ be a topological dynamical system and $\Phi=(nA)_{n\in\mathbb N},A>0$.
Then
\begin{description}
\item[(1)]
If $T:X\to X$ is a contractive continuous map, then $PD_i(T,\Phi)=1$, $i=1,2,3,4$.
\item[(2)]
If $X$ is an unit circle and $T:X\to X$ is a homeomorphism, then $PD_i(T,\Phi)=1$, $i=1,2,3,4$.
\end{description}
\end{exmp}
\noindent{\bf Proof of Example 4.2.} (1) Since $T:X\to X$ is a contractive continuous map,
it follows that $D(T)=0$ from \cite{Carvalho}. By (2) of Theorem \ref{thm4.3}, we have
$PD_i(T,\Phi)=1$, $i=1,2,3,4$.

(2) From \cite{Cheng-Li}, we have $D(T)=0$. Hence, by Theorem \ref{thm4.3} (2),
$PD_i(T,\Phi)=1$, $i=1,2,3,4$.

\noindent {\bf Acknowledgements.}    The work was supported by the Fundamental Research
Funds for the Central Universities (grant No. WK0010000035).
%%%%%%%%%%%%%%%%%%%%%%%%%%%%%%%%%%%%%%%%%%%%%%%%

\end{document}